\definecolor{myurlcolor}{rgb}{0,0,0.7}
\newtheorem{rmq}{Remark}[section]
\newtheorem{dfn}{Definition}[section]
\newtheorem{lem}{Lemma}[section]
\newtheorem{thm}{Theorem}[section]
\newcommand{\bprof}{\begin{prof}}
\newcommand{\eprof}{\end{prof}}
\newenvironment{prof}[1][Proof]{\textbf{#1.} }{\ \rule{0.5em}{0.5em}}
\newcommand{\bea}{\begin{eqnarray}}
\newcommand{\eea}{\end{eqnarray}}
\newcommand{\beq}{\begin{equation}}
\newcommand{\eeq}{\end{equation}}
\newcommand{\enn}{\nonumber \end{equation}}
\newcommand{\beqs}{\begin{eqnarray*}}
\newcommand{\eeqs}{\end{eqnarray*}}
 \newcommand{\cE}{\mathcal{E}}
\newcommand{\cT}{\mathcal{T}}
\newcommand{\curl}{\mathop{\rm curl}\nolimits}
\newcommand{\dive}{\mathop{\rm div}\nolimits}
\def\Om{\Omega}
\title[A posteriori error analysis]
{A posteriori error analysis for a new fully-mixed isotropic discretization of the stationary Stokes-Darcy coupled problem}
\author{ Hou\'edanou Koffi Wilfrid$^{(a)}$ and Adetola Jamal $^{(b)}$}
\email{a) khouedanou@yahoo.fr}
\address{D\'epartement de Math\'ematiques,
	Universit\'e d'Abomey-Calavi (UAC), Rep. of Benin}
\email{b) adetolajamal58@yahoo.com}
\address{Institut de Mathématiques et de Sciences Physiques (IMSP),
	Universit\'e d'Abomey-Calavi (UAC), Rep. of Benin}
\begin{document}

\maketitle
\begin{normalsize}
\begin{abstract}\normalsize
	In this paper we develop an a posteriori error analysis 
	for the stationary
	Stokes-Darcy coupled problem approximated by conforming finite element method on 
	isotropic meshes in $\mathbb{R}^d$, $d\in\{2,3\}$.
	The approach utilizes a new robust stabilized fully mixed discretization developed in \cite{JAFH:2018}.
	The a posteriori error estimate is based on a suitable evaluation on the residual of the finite 
	element solution plus the stabilization terms. It is proven that the a posteriori error estimate provided in this paper is both reliable and efficient.
	\\
	\small{\bf Mathematics Subject Classification [MSC]:} 74S05,74S10,74S15,
	74S20,74S25,74S30.\\
	{\bf Key Words:} Stokes-Darcy problem; conforming finite element method; Stabilized scheme;  A posteriori error analysis.
\end{abstract}
\tableofcontents
\section{Introduction}
There are many serious problems currently facing the world in which the coupling between groundwater and surface water is 
important. These include questions such as predicting how pollution discharges into streams, lakes, and rivers making its way into 
the water supply. This coupling is also important in technological applications involving filtration.
We  refer to the nice overview \cite{27} and the references therein for its physical background, modeling, and standard numerical 
methods. One important issue in the modeling of the coupled Darcy-Stokes flow is the treatement of the interface 
condition, where the  Stokes fluid meets the porous medium. In this paper, we only consider the so-called Beavers-Joseph-Saffman condition, which was experimentally derived by Beavers and Joseph in \cite{23}, modified by Saffman in \cite{44}, 
and later mathematically justified in \cite{32,17,48,37}.

There are three 
popular  formulations of the coupled Darcy-Stokes flow, namely  the primal formulation, the  mixed formulation in the Darcy region or the fully mixed formulation, see for examples \cite{21, 45,29,30,34,12,38,39,JAFH:2018} for some mathematical analysis. The authors in \cite{12} studied two different mixed 
formulations: the first one imposes the   weak continuity of the normal component of the velocity field on the interface, by using a 
Lagrange multiplier; while the second one imposes the  strong continuity  in the functional space.
Later on we   call these two mixed formulations, the weakly coupled formulation and the strongly
coupled formulation  respectively.
The weakly coupled formulation gives more freedom in the choice of   the discretization in the Stokes side and the Darcy side
separately. The works in \cite{7,45,29,30,12,AHN:15,HA:2016} are based on the weakly coupled formulation. Researches on the strongly coupled 
formulation have been focused on the development of an unified discretization,  that is, the Stokes side and
the Darcy side are discretized using the same finite element. This approach   simplifies the numerical implementation, 
  only if the unified discretization is not significantly more complicated than the commonly used discretizations for
the Darcy and the Stokes problems. 
In \cite{21,GL:2018}, a conforming, unified finite element has been proposed for the strongly coupled mixed formulation.
Superconvergence analysis of the finite element methods for the Stokes-Darcy system was studied in \cite{16}.
Other  less
restrictive 
discretizations   as the non-conforming unified approach \cite{7,34} or the discontinuous Galerkin  $(\textbf{DG})$ 
approach   have been proposed in \cite{33,38,39}. Due to its discontinuous nature, some $(\textbf{DG})$ discretizations for the coupled 
Darcy-Stokes problem may break the strong coupling in the discrete level \cite{38,39}, as they impose the normal continuity 
across the interface via interior penalties. 

A posteriori error estimators are computable quantities, expressed in terms of the discrete solution  and of the data that measure the actual discrete
errors without the knowledge of the exact solution. They are essential to design adaptive mesh 
refinement  algorithms  which equi-distribute the computational effort and optimize the approximation efficiency. 
Since the pioneering work of Babuska and Rheinboldt \cite{babuska:78a},   adaptive finite element methods based on 
a posteriori error estimates have been extensively investigated.


A posteriori error estimations have been well-established for both the mixed formulation of the Darcy flow \cite{Ca:97,24,35}, and the Stokes flow \cite{20,22,25,26,28,31,36,40,41,42}.
However,  only  few works exist for the coupled Darcy-Stokes problem, see for instance \cite{43,49,46,47,AHN:15}.
The paper \cite{43,AHN:15} concern the strongly coupled mixed formulation  where a $H(\dive)$ conforming and nonconforming finite element methods
have been used and \cite{46,49} concern the weakly coupled mixed formulation while \cite{46} uses the 
primal formulation  on the Darcy side. The authors in \cite{47} employ a fully-mixed formulation where Raviart-Thomas elements have been used to approximate the velocity in both the Stokes domain and Darcy domain, and constant piecewise  for approximate the pressure.

In \cite{JAFH:2018}, a stabilized finite element method for the stationary mixed Stokes-Darcy problem has been proposed for  the fully-mixed formulation.
The authors have used the well-know MINI elements ($P1b-P1$) to approximate the velocity and pressure in the conduit for Stokes equation. To capture the fully mixed technique in the porous medium region linear Lagrangian elements, $P1$ have been used for hydraulic (piezometric) head and Brezzi-Douglas-Marini ($BDM1$) piecewise constant finite elements have been used for Darcy velocity. 
An a priori error analysis is performed with some numerical tests confirming the convergence rates.
To our best knowledge, there is no a posteriori error estimation for the fully-mixed discretization proposed in \cite{JAFH:2018}.
Here we develop such a posteriori error analysis. 
The a posteriori error estimate is based on a suitable evaluation on the residual of the finite 
element solution. We further prove that our a posteriori error estimator is both reliable and efficient.
The difference between our paper and the   reference \cite{47} is that our discretization  uses 
MINI elements ($P1b-P1$) to approximate the velocity and pressure in the conduit for Stokes equations, $P1$-Lagrange elements to approximate hydraulic (piezometric) head and Brezzi-Douglas-Marini ($BDM1$) piecewise constant finite elements have been used for Darcy velocity. As a
result, additional term is included in the error estimator that measure the stability
of the method. In order to treat appropriately this stability term, we further need
a special Helmholtz decomposition \cite[Theorem 3.1]{AHN:15},
  a regularity result \cite[Theorem 3.2]{AHN:15} and an estimate of the stability error  
 \cite[Theorem 3.3]{AHN:15}.

The   paper  is organized as follows.
Some  preliminaries and  notation are given in  section  \ref{sec:R1}. 
The efficiency result is derived using the technique of bubble 
function introduced by R. Verf\"{u}rth \cite{verfurth:96b} and used in similar context by 
C. Carstensen \cite{Ca:97,carstensenandall}.
In section \ref{sec:R2}, the a posteriori error estimates are derived. 
\section{Preliminaries and Notations}\label{sec:R1}
\subsection{Model problem}
We consider the model of a flow in a bounded domain $\Omega\subset \mathbb{R}^d$ $(d=2 \mbox{  or  } 3)$, consisting of a 
porous medium domain $\Omega_p$, where the flow is a Darcy flow, and an  open region $\Omega_f=\Omega\smallsetminus 
\overline{\Omega}_p,$ where the flow is governed by the Stokes equations. The two regions are separated by an interface 
$\Gamma=\partial \Omega_p\cap \partial \Omega_f.$ Let $\Gamma_l=\partial \Omega_l\smallsetminus \Gamma$, $l=f,p$.
Each interface and boundary is assumed to be polygonal $(d=2)$ or polyhedral 
$(d=3)$. We denote by $\textbf{n}_f$ (resp. $\textbf{n}_p$) the 
unit outward normal vector along $\partial \Omega_f$  (resp. $\partial \Omega_p$). Note that on the interface 
$\Gamma$, we have $\textbf{n}_f=-\textbf{n}_s$. 
The Figure \ref{F1} shows a sketch of the problem domain, its boundaries and some other notations.
\begin{figure}[htpb]
 \centering
\begin{center}
	\tikzstyle{grisEncadre}=[thick, dashed, fill=gray!20]
	\begin{tikzpicture}[scale=0.85]
	color=gray!100;
	\draw [very thick](1,1)--(7,1);
	\draw [very thick](1,1)--(1,5.5);
	\draw [very thick](1,5.5)--(7,5.5);
	\draw [very thick](7,1)--(7,5.5);
	\draw [very thick](1,3)--(7,3);
	\draw [black,fill=gray!30] (1,1) -- (7,1) -- (7,3) --(1,3) -- cycle;
	\draw (3.7,1.6) node [above]{$\mbox{ \small\small $\Omega_p$:  \textbf{\small \small Porous Medium} }$};
	\draw (3.7,4) node [above]{$\mbox{  $\Omega_f$:  \textbf{\small \small Fluid Region} }$};
	\draw [>=stealth,->] [line width=1pt](2,3)--(2,3.5) node [right]{$\textbf{n}_p$};
	\draw [>=stealth,->] [line width=1pt](6,3)--(6,2.5) node [right]{$\textbf{n}_f$};
	\draw [>=stealth,->] [line width=1pt](4,2.8)--(4.7,2.8) node [below]{$\tau_j$};
	\draw  (4.5,3.8) node [below]{$\Gamma$};
	\draw[line width=0.5pt](1,1)--(1,3) node[midway,above,sloped]{$\Gamma_p$};
	\draw[line width=0.5pt](1,1)--(7,1) node[midway,below,sloped]{$\Gamma_p$};
	\draw[line width=0.5pt](7,1)--(7,3) node[midway,below,sloped]{$\Gamma_p$};
	\draw[line width=0.5pt](1,3)--(1,5.5) node[midway,above,sloped]{$\Gamma_f$};
	\draw[line width=0.5pt](1,5.5)--(7,5.5) node[midway,above,sloped]{$\Gamma_f$};
	\draw[line width=0.5pt](7,3)--(7,5.5) node[midway,below,sloped]{$\Gamma_f$};
	\end{tikzpicture}
\end{center}
\caption{\footnotesize{Global domain $\Omega$ consisting of the fluid region $\Omega_f$ and the porous media region $\Omega_p$ separated by the interface $\Gamma$.}}
\label{F1}
\end{figure}
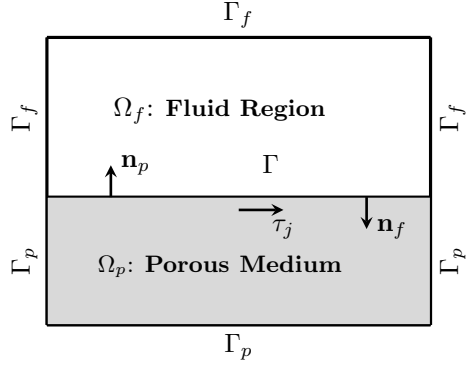\mbox{ }\\
The fluid velocity and pressure $\textbf{u}_f(x)$ and $p(x)$ are governed by the Stokes equations in $\Omega_f$:
\begin{eqnarray}\label{I1}
 \left\{
\begin{array}{ccccccccc}\label{r}
 -2\nu\nabla\cdot \mathbb{D}(\textbf{u}_f)+ \nabla p &=&\textbf{f}_f &\mbox{ in }&  &\Omega_f,&\\
\nabla\cdot\textbf{u}_f&=&0 &\mbox{  in   }&  &\Omega_f,&
\end{array}
\right.
\end{eqnarray}
where $\mathbb{T}=-p\mathbb{I}+2\nu\mathbb{D}(\textbf{u}_f)$ denotes the stress tensor, and 
$\mathbb{D}(\textbf{u}_f)=\frac{1}{2}\left(\nabla \textbf{u}_f+(\nabla\textbf{u}_f)^{T}\right)$ represents the deformation tensor. 
The porous media flow is governed by the following Darcy equations on $\Omega_p$ through the fluid velocity $\textbf{u}_p(x)$ and the piezometric head $\phi(x)$:
\begin{eqnarray}\label{I2}
 \left\{
\begin{array}{cccccccccccc}\label{rDarcy}
 \textbf{u}_p&=&-\textbf{K}\nabla \phi &\mbox{ in }&   &\Omega_p,&\\
\nabla\cdot \textbf{u}_p&=& f_p &\mbox{ in  }&  &\Omega_p.& \\
\end{array}
\right.
\end{eqnarray}
We impose impermeable boundary conditions, $\textbf{u}_p\cdot\textbf{n}_p=0$ on $\Gamma_p$, on the exterior boundary of the porous media region, and no slip conditions, $\textbf{u}_f=0$ on $\Gamma_f$, in the Stokes region. Both selections of boundary conditions can be modified. On $\Gamma$ the interface coupling conditions are conservation of mass, balance of forces and a tangential condition on the fluid region's velocity on the interface. The correct tangential condition is not competely understood (possibly due to matching a pointwise velocity in the fluid region with an averaged or homogenized velocity in the porous region). In this paper, we take the Beavers-Joseph-Saffman (-Jones), see \cite{32,17,48,37,44,23}, interfacial coupling:

\begin{eqnarray} \label{cd1}
\textbf{u}_f\cdot \textbf{n}_f+\mathbf{u}_p\cdot \textbf{n}_p&=&0 \mbox{  on  } \Gamma \\
\label{cd2}
-\textbf{n}_f\cdot\mathbb{T}\cdot\textbf{n}_f=
p-2\nu\textbf{n}_f\cdot\mathbb{D}(\textbf{u}_f)\cdot\textbf{n}_f&=&\rho g\phi\mbox{  on  } \Gamma\\
\label{cd3}
-\textbf{n}_f\cdot\mathbb{T}\cdot\tau_j=-2\textbf{n}_f\cdot\mathbb{D}(\textbf{u}_f)\cdot\tau_j&=&\frac{\alpha}{\sqrt{\tau_j\cdot\textbf{K}\tau_j}}\textbf{u}_f\cdot\tau_j, 1\leq j\leq (d-1) \mbox{  on  } \Gamma.
\end{eqnarray}
This is a simplification of the original and more physically relistic Beavers-Joseph conditions (in which $\textbf{u}_f\cdot\tau_j$ in (2.8) is replaced by $(\textbf{u}_f-\textbf{u}_p)\cdot\tau_j$; see \cite{23} ).
Here we denote 
\\
$\textbf{f}_f$, $f_p$-body forces in the fluid region and source in the porous region,\\
$\textbf{K}$-symmetric positive define (SPD) hydraulic conductivity tensor,\\
$\alpha$-constant parameter.

We shall also assume that all material and fluid parameters defined above are uniformly positive and bounded, i.e.,
$$0\leq k_{\min}\leq \lambda(\textbf{K})\leq k_{\max}< \infty.$$
\subsection{Notations and the weak formulation}
In this part, we first introduce some Sobolev spaces \cite{Adam:2003} and norms.
If $W$ is a bounded domain of $\mathbb{R}^d$ and $m$ 
is a non negative integer, the Sobolev space $H^m(W)=W^{m,2}(W)$ is 
defined in the usual way with the usual norm $\parallel\cdot\parallel_{m,W}$ and semi-norm $|.|_{m,W}$. In particular, 
$H^0(W)=L^2(W)$ and we write $\parallel\cdot\parallel_W$ for $\parallel\cdot\parallel_{0,W}$.
Similarly we   denote by
    $(\cdot,\cdot)_{W}$  the $L^2(W)$ $[L^2(W)]^N$ or $ [L^2W)]^{d\times d}$ inner product.
For shortness if $W$ is equal to $\Omega$, we will drop  the index $\Omega$, while  for any $m\geq 0$, 
$\parallel\cdot\parallel_{m,l}=\parallel\cdot\parallel_{m,\Omega_l}$, $|.|_{m,l}=|.|_{m,\Omega_l}$ 
and $(.,.)_l=(\cdot,\cdot)_{\Omega_l}$, for $l=f,s$.
The space  $H_0^m(\Omega)$ denotes the closure of $C_0^{\infty}(\Omega)$ in $H^{m}(\Omega)$. Let $[H^m(\Omega)]^d$ be the space of
vector valued functions $\textbf{v}=(v_1,\ldots,v_d)$ with components  $v_i$ in $H^m(\Omega)$. The 
norm and the seminorm on $[H^m(\Omega)]^d$ are given by 
\begin{eqnarray}
 \parallel\textbf{v}\parallel_{m,\Omega}:=\left(\sum_{i=0}^N\parallel v_i\parallel_{m,\Omega }^2\right)^{1/2} 
 \mbox{  and  } |\textbf{v}|_{m,\Omega}:= \left(\sum_{i=0}^N |v_i|_{m,\Omega}^2\right)^{1/2}.
\end{eqnarray}
For a connected open subset of the boundary $E\subset \partial \Omega_s\cup\partial \Omega_d$, we write 
$\langle.,.\rangle_{E}$ for the $L^2(E)$ inner product (or duality pairing), that is, for scalar valued functions $\lambda$, 
$\sigma$ one defines:
\begin{eqnarray}
 \langle\lambda,\sigma\rangle_{E}:=\int_{E} \lambda\sigma ds
\end{eqnarray}

By setting the space
$$H_{\dive}:=H(\dive;\Omega_p)=\left\{ \textbf{v}_p\in [L^2(\Omega_p)]^d: \nabla\cdot \textbf{v}_p\in L^2(\Omega_p)\right\},$$
we introduce the following spaces:
\begin{eqnarray*}
	\textbf{X}_f&:=&\left\{ \textbf{v}_f\in [L^2(\Omega_f)]^d: \textbf{v}_f=\textbf{0} \mbox{ on } \Gamma_f  \right\},\\
	Q_f&:=&L^2(\Omega_f),\\
	\textbf{X}_p&:=&\left\{\textbf{v}_p\in H(\dive; \Omega_p): \textbf{v}_p\cdot \textbf{n}_p=0 \mbox{  on } \Gamma_p \right\},\\
	Q_p&:=&L^2(\Omega_p).
\end{eqnarray*}
For the spaces $\textbf{X}_f$ and $\textbf{X}_p$, we define the following norms:
\begin{eqnarray*}
	\parallel \textbf{v}_f\parallel_1&:=& \sqrt{\parallel \textbf{v}_f\parallel_{\Omega_f}^2+|\textbf{v}_f|_{1,\Omega_f}^2}, 
	\mbox{   with  }  |\textbf{v}_f|_{1,\Omega_f}=\parallel\nabla\textbf{v}_f\parallel_{\Omega_f} \forall \textbf{v}_f\in \textbf{X}_f, \\
	\parallel\textbf{v}_p\parallel_{\dive}&:=&\sqrt{\parallel \textbf{v}_p\parallel_{\Omega_p}^2+\parallel\nabla\cdot\textbf{v}_p\parallel_{\Omega_p}^2}, \forall \textbf{v}_p\in\textbf{X}_p.
\end{eqnarray*}
The variational formulation of the steady-state Stokes-Darcy problem (\ref{I1})-(\ref{cd3}) reads as:
Find $(\textbf{u}_f,p;\textbf{u}_p,\phi)\in (\textbf{X}_f,Q_f;\textbf{X}_p,Q_p)$ satisfying:
\begin{eqnarray}\label{e1}
a_f(\textbf{u}_f,\textbf{v}_f)-b_f(\textbf{v}_f,p)+c_{\Gamma}(\textbf{v}_f,\phi)&=&(\textbf{f}_f,\textbf{v}_f)_{\Omega_f} \mbox{     }  \forall\textbf{v}_f\in \textbf{X}_f,\\\label{e2}
b_f(\textbf{u}_f,q)&=&0 \mbox{    }\forall  q\in Q_f,\\\label{e3}
a_p(\textbf{u}_p,\textbf{v}_p)-b_p(\textbf{v}_p,\phi)-c_{\Gamma}(\textbf{v}_p,\phi)&=&0 \mbox{   } \forall \textbf{v}_p\in\textbf{X}_p,\\\label{e4}
b_p(\textbf{u}_p,\psi)&=&\rho g (f_p,\psi)_{\Omega_p}  \mbox{  } \psi \in Q_p,
\end{eqnarray}
where the bilinear forms are defined as:
\begin{eqnarray*}
	a_f(\textbf{u}_f,\textbf{v}_f)&:=&2\nu (\mathbb{D}(\textbf{u}_f),\mathbb{D}(\textbf{v}_f))_{\Omega_f}+
	\displaystyle\sum_{j=1}^{d-1}\frac{\alpha}{\sqrt{\tau_j\cdot\textbf{K}\tau_j}}\left<\textbf{u}_f\cdot\tau_j,\textbf{v}_f\cdot\tau_j\right>_{\Gamma}\\
	a_p(\textbf{u}_p,\textbf{v}_p)&:=&\rho g(\textbf{K}^{-1}\textbf{u}_p,\textbf{v}_p)_{\Omega_p},\\
	b_f(\textbf{v}_f,p)&:=&(p,\nabla\cdot\textbf{v}_f)_{\Omega_f},\\
	b_p(\textbf{v}_p,\phi)&:=&\rho g(\phi,\nabla\cdot\textbf{v}_p)_{\Omega_p},\\
	c_{\Gamma}(\textbf{v}_f,\phi)&:=&\rho g\left<\phi,\textbf{v}_f\cdot\textbf{n}_f\right>_{\Gamma}.
\end{eqnarray*}
After introducing, for $\textbf{U}=(\textbf{u}_f,p,\textbf{u}_p,\phi)\in \textbf{X}_f\times Q_p\times \textbf{X}_p\times Q_p=\textbf{H}$ and $\textbf{V}=(\textbf{v}_f,q,\textbf{v}_p,\psi)\in \textbf{X}_f\times Q_p\times \textbf{X}_p\times Q_p$,
\begin{eqnarray}
\mathcal{L}(\textbf{U},\textbf{V})&:=&
a_f(\textbf{u}_f,\textbf{v}_f)-b_f(\textbf{v}_f,p)+	b_f(\textbf{u}_f,q) \\\nonumber
       &+&
	a_p(\textbf{u}_p,\textbf{v}_p)-	b_p(\textbf{v}_p,\phi)+
		b_p(\textbf{u}_p,\psi)+	c_{\Gamma}(\textbf{v}_f-\textbf{v}_p,\phi),\\
		\mathcal{F}(\textbf{V})&:=&(\textbf{f}_f,\textbf{v}_f)_{\Omega_f}+\rho g(f_p,\psi)_{\Omega_p},
\end{eqnarray}
the weak formulation (\ref{e1})-(\ref{e4}) can be equivalently rewritten as follows: Find 
$\textbf{U}\in \textbf{H}$ satisfying
\begin{eqnarray}\label{Fequivalente}
\mathcal{L}(\textbf{U},\textbf{V})=\mathcal{F}(\textbf{V}), \forall \textbf{V}\in\textbf{H}.
\end{eqnarray}
It is easy to verify that this variational fromulation is well-posedness.

To end this section, we recall the following Poincar\'e, Korn's and the trace inequalities, which will be used in the later analysis; There exist constant $C_p$, $C_K$, $C_v$, only depending on $\omega_f$ such that for all $\textbf{v}_f\in\textbf{X}_f$, 
$$\parallel\textbf{v}_f\parallel\leq C_p|\textbf{v}_f|_1, \mbox{   }
|\textbf{v}_f|\leq C_K\parallel\mathbb{D}(\textbf{v}_f)\parallel_{\Omega_f}, \mbox{  } 
\parallel \textbf{v}_f\parallel_{\Gamma} \leq C_v\parallel\textbf{v}_f\parallel_{\Omega_f}^{1/2}
|\textbf{v}_f|_{1,\Omega_f}^{1/2}.$$
Besides, there exists a constant $\tilde{C}_v$ that only depends on $\Omega_p$ such that for all 
$\psi\in Q_p$,
\begin{eqnarray}
\parallel \psi\parallel_{L^2(\Gamma)}\leq \tilde{C}_v\parallel\psi\parallel_{\Omega_p}^{1/2}
|\psi |_{1,\Omega_p}^{1/2}.
\end{eqnarray}
\subsection{Fully-mixed isotropic discretization}
First, we consider the family of triangulations $\cT_h$ of $\Omega$, consisting of $\cT_h^f$ and $\cT_h^p$, which are  regular triangulations of $\Omega_f$ and $\Omega_p$, respectively, where $h> 0$ is a positive parameter. We also assume that on the interface $\Gamma$ the two meshes of $\cT_h^f$ and $\cT_h^p$, which form the regular triangulation $\cT_h:=\cT_h^f\cup\cT_h^p,$ coincide. 

The domain of the uniformly regular triangulation $\overline{\Omega}_f\cup\overline{\Omega}_p$ is such that $\overline{\Omega}=\left\{\cup K: K\in\cT_h\right\}$ and $h=\displaystyle\max_{K\in\cT_h} h_K$. There exist positive constants $c_1$ and $c_2$ satisfying $c_1h\leq h_K\leq c_2\rho_K$. To approximate the diameter $h_K$ of the triangle (or tetrahedral) $K$, $\rho_K$ is the diameter of the greatest ball included in $K$. Based on the subdivisions $\cT_h^f$ and $\cT_h^p$, we can define finite element spaces $\textbf{X}_{fh}\subset \textbf{X}_h$, $Q_{fh}\subset Q_f$, $\textbf{X}_{ph}\subset \textbf{X}_p$, $Q_{ph}\subset Q_p$. We consider the well-known MINI elements $(P1b-P1)$ to approximate the velocity and the pressure in the conduit for Stokes equations \cite{ABF:1984}. To capture the fully-mixed technique in the porous medium region linear Lagrangian elements, $P1$ are used for hydraulic (piezometric) head and Brezzi-Douglas-Marini ($BDM1$) piecewise constant finite elements are used for Darcy velocity \cite{BDM:1985}. \\
In the fluid region, we select for the Stokes problem the finite element spaces $(\textbf{X}_{fh},Q_{fh})$ that satisfy the velocity-pressure inf-sup condition:
\emph{There exists a constant $C_f> 0$, independent of $h$, such that,}
\begin{eqnarray}
	\displaystyle\inf_{0\neq q^h\in Q_{fh}}\displaystyle\sup_{\textbf{0}\neq \textbf{v}_f^h\in \textbf{X}_{fh}}\frac{b_f(\textbf{v}_f^h,q^h)}{|\textbf{v}_f^h|_{1,\Omega_f}\parallel q_f^h\parallel_{\Omega_f}}\geq C_f.
\end{eqnarray}
In the porous region, we use the finite element spaces $(\textbf{X}_{ph},Q_{ph})$ that also satisfy a standard inf-sup condition:\emph{ There exist a constant $C_p> 0$ such that for all $\phi^h\in Q_{ph}$}, 
\begin{eqnarray}
\displaystyle\inf_{0\neq \phi^h\in Q_{ph}}\displaystyle\sup_{\textbf{0}\neq \textbf{v}_p^h\in \textbf{X}_{ph}}\frac{b_p(\textbf{v}_p^h,\phi^h)}{\parallel\textbf{v}_f^h\parallel_{\dive}\parallel \phi^h\parallel_{\Omega_p}}\geq C_p.
\end{eqnarray}

Then  the finite element discretization of (\ref{Fequivalente}) is to  find 
$\textbf{U}_h\in \textbf{H}_h=\textbf{X}_{fh}\times Q_{fh}\times \textbf{X}_{ph}\times Q_{ph}$ such that 
\begin{eqnarray}\label{discrete}
 \mathcal{L}(\textbf{U}_h,\textbf{V}_h)+\textbf{J}_{\Gamma}(\textbf{U}_h,\textbf{V}_h)=\mathcal{F}(\textbf{V}_h) \mbox{  } \forall \textbf{V}_h\in \textbf{H}_h.
\end{eqnarray}
This is the natural discretization of the weak formulation (\ref{Fequivalente}) except that  the stabilized term 
$\textbf{J}_{\Gamma}(\textbf{U}_h,\textbf{V}_h)$ is added. This bilinear form $\textbf{J}_{\Gamma}(.,.)$ is defined by 
\begin{eqnarray}
\textbf{J}_{\Gamma}(\textbf{U}_h,\textbf{V}_h):=\frac{\delta}{h}
\left<(\textbf{u}_f^h-\textbf{u}_p^h)\cdot\textbf{n}_f,(\textbf{v}_f^h-\textbf{v}_p^h )\cdot \textbf{n}_f\right>_{\Gamma}, \mbox{  } 0< h < 1.
\end{eqnarray}
We are now able to define the norm on $\textbf{H}_h$:
\begin{eqnarray*}
 \parallel\textbf{V}\parallel_h:=\sqrt{
 \parallel\textbf{v}_f^h\parallel_{1,\Omega_f}^2+\parallel q_f^h\parallel_{\Omega_f}^2+
 \parallel \textbf{v}_p^h\parallel_{\dive}^2+\parallel \psi^h\parallel_{\Omega_p}^2+h^{-1}\parallel (\textbf{v}_f^h-\textbf{v}_p^h)\parallel_{\Gamma}^2}
\end{eqnarray*}
We have the following results (see \cite[Theorem 2 and  Theorem 3]{JAFH:2018}):
\begin{thm}
There exists a unique solution $\textbf{U}_h\in \textbf{H}_h$ to   problem (\ref{discrete}) and 
 if  the solution $\textbf{U}\in \textbf{H}$ of the continuous problem (\ref{Fequivalente}) 
 is smooth enough, then we have:  
\begin{eqnarray}
  \parallel \textbf{U}-\textbf{U}_h\parallel_h\leq C(\textbf{U}) h.
 \end{eqnarray}
\end{thm}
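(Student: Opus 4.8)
The plan is to view (\ref{discrete}) as a square linear system on the finite-dimensional space $\textbf{H}_h$ driven by the bilinear form $\mathcal{A}_h(\cdot,\cdot):=\mathcal{L}(\cdot,\cdot)+\textbf{J}_\Gamma(\cdot,\cdot)$, and to establish, with constants independent of $h$, both $\mathrm{(i)}$ boundedness of $\mathcal{A}_h$ on $\textbf{H}_h\times\textbf{H}_h$ with respect to $\|\cdot\|_h$, and $\mathrm{(ii)}$ a discrete inf--sup (stability) bound $\sup_{\textbf{0}\neq\textbf{V}_h\in\textbf{H}_h}\mathcal{A}_h(\textbf{U}_h,\textbf{V}_h)/\|\textbf{V}_h\|_h\geq\beta\,\|\textbf{U}_h\|_h$. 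Property $\mathrm{(ii)}$ forces the system matrix to be injective, hence (the system being square) invertible, which yields existence and uniqueness of $\textbf{U}_h$; combined with $\mathrm{(i)}$ and consistency it will also give the error bound through a Strang/C\'ea-type argument.

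For boundedness I would estimate the individual forms in $\mathcal{L}$: $a_f$ and $a_p$ are controlled by $|\cdot|_{1,\Omega_f}$ and $\|\cdot\|_{\dive}$ using the uniform bounds on $\nu$, $\alpha$ and the spectrum of $\textbf{K}$, together with the trace inequalities recalled at the end of Section \ref{sec:R1} for the Beavers--Joseph--Saffman boundary term; $b_f$ and $b_p$ are immediate; and both $c_\Gamma$ and $\textbf{J}_\Gamma$ are absorbed using the $h^{-1}$-weighted interface quantity built into $\|\cdot\|_h$, supplemented where necessary by an inverse inequality on $Q_{ph}$ to pass from $\|\psi^h\|_{\Omega_p}$ to a trace on $\Gamma$.

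The core of the argument is the discrete stability $\mathrm{(ii)}$. I would follow the standard construction for stabilized mixed couplings: given $\textbf{U}_h=(\textbf{u}_f^h,p^h,\textbf{u}_p^h,\phi^h)$, choose $\textbf{V}_h$ as a small perturbation of $(\textbf{u}_f^h,-p^h,\textbf{u}_p^h,-\phi^h)$. Evaluating $\mathcal{A}_h$ on the unperturbed part already yields $a_f(\textbf{u}_f^h,\textbf{u}_f^h)+a_p(\textbf{u}_p^h,\textbf{u}_p^h)+\textbf{J}_\Gamma(\textbf{U}_h,\textbf{U}_h)$, which by Korn's inequality, the spectral bounds on $\textbf{K}^{-1}$ and the explicit $\delta/h$ weight controls $|\textbf{u}_f^h|_{1,\Omega_f}^2$, $\|\textbf{u}_p^h\|_{\Omega_p}^2$ and the $h^{-1}$-weighted interface jump; the pressure norm $\|p^h\|_{\Omega_f}$ is then recovered by adding $\varepsilon_1\textbf{w}_f$ to the first slot, where $\textbf{w}_f\in\textbf{X}_{fh}$ realizes the fluid inf--sup condition for $p^h$, while $\|\phi^h\|_{\Omega_p}$ and $\|\nabla\cdot\textbf{u}_p^h\|_{\Omega_p}$ are recovered analogously from the Darcy inf--sup condition and from $b_p(\textbf{u}_p^h,\cdot)$. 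Picking the perturbation parameters $\varepsilon_i$ small enough to absorb the resulting cross terms — in particular the interface coupling $c_\Gamma$ and the interface contributions generated by $\textbf{w}_f$, handled via trace and inverse inequalities — yields $\mathcal{A}_h(\textbf{U}_h,\textbf{V}_h)\geq c\,\|\textbf{U}_h\|_h^2$ together with $\|\textbf{V}_h\|_h\leq C\,\|\textbf{U}_h\|_h$. I expect this last absorption step, namely keeping all interface terms under control with $h$-independent constants, to be the main technical obstacle.

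For the error estimate I would first record consistency: since $\textbf{H}_h\subset\textbf{H}$, (\ref{Fequivalente}) gives $\mathcal{L}(\textbf{U},\textbf{V}_h)=\mathcal{F}(\textbf{V}_h)$ for all $\textbf{V}_h\in\textbf{H}_h$, while the mass-conservation interface condition (\ref{cd1}) yields $(\textbf{u}_f-\textbf{u}_p)\cdot\textbf{n}_f=\textbf{u}_f\cdot\textbf{n}_f+\textbf{u}_p\cdot\textbf{n}_p=0$ on $\Gamma$, hence $\textbf{J}_\Gamma(\textbf{U},\textbf{V}_h)=0$ and therefore $\mathcal{A}_h(\textbf{U}-\textbf{U}_h,\textbf{V}_h)=0$ for all $\textbf{V}_h\in\textbf{H}_h$. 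Combining this orthogonality with $\mathrm{(ii)}$ and $\mathrm{(i)}$ gives, for every $\textbf{W}_h\in\textbf{H}_h$, $\beta\|\textbf{U}_h-\textbf{W}_h\|_h\leq\sup_{\textbf{V}_h}\mathcal{A}_h(\textbf{U}-\textbf{W}_h,\textbf{V}_h)/\|\textbf{V}_h\|_h\leq C\|\textbf{U}-\textbf{W}_h\|_h$, whence by the triangle inequality $\|\textbf{U}-\textbf{U}_h\|_h\leq(1+C/\beta)\inf_{\textbf{W}_h\in\textbf{H}_h}\|\textbf{U}-\textbf{W}_h\|_h$. Finally I would take $\textbf{W}_h$ to be the componentwise interpolant of $\textbf{U}$ — the MINI interpolant of $(\textbf{u}_f,p)$, the $P_1$ Lagrange interpolant of $\phi$, the $BDM_1$ interpolant of $\textbf{u}_p$ — and invoke the associated first-order approximation estimates, together with a scaled trace estimate of the form $\|v-\Pi v\|_{L^2(\Gamma)}\leq C\,h^{3/2}\,|v|_{2}$ to absorb the $h^{-1/2}$ weight on the interface term; this gives $\inf_{\textbf{W}_h}\|\textbf{U}-\textbf{W}_h\|_h\leq C(\textbf{U})\,h$ whenever $\textbf{U}$ belongs to the requisite higher-order Sobolev spaces (the meaning of ``smooth enough''), completing the proof.
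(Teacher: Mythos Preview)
The paper does not supply its own proof of this theorem; the statement is quoted verbatim from \cite[Theorems 2 and 3]{JAFH:2018} and no argument is given here, so there is nothing in the paper to compare your proposal against. Your outline --- continuity of $\mathcal{A}_h:=\mathcal{L}+\textbf{J}_\Gamma$, a discrete inf--sup obtained by perturbing $(\textbf{u}_f^h,-p^h,\textbf{u}_p^h,-\phi^h)$ with inf--sup--realizing velocities, consistency via $\textbf{J}_\Gamma(\textbf{U},\cdot)=0$ from (\ref{cd1}), and a C\'ea/Strang bound closed by MINI/$BDM_1$/$P_1$ interpolation --- is precisely the standard route for stabilized Stokes--Darcy couplings and is the strategy followed in \cite{JAFH:2018}.

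One point worth flagging: in your C\'ea step you bound $\mathcal{A}_h(\textbf{U}-\textbf{W}_h,\textbf{V}_h)$ by $C\|\textbf{U}-\textbf{W}_h\|_h$, but the continuity you established in (i) is only on $\textbf{H}_h\times\textbf{H}_h$, and the inverse inequality you invoke for the $c_\Gamma$ term is not available for the non-discrete object $\textbf{U}-\textbf{W}_h$. The fix (and this is what \cite{JAFH:2018} effectively does) is to use, for this one step, the extra regularity of $\textbf{U}$ assumed in the hypothesis ``smooth enough'': a direct trace bound $\|\phi-\phi_h\|_{L^2(\Gamma)}\lesssim h\,|\phi|_{H^1}$-type estimate handles the $c_\Gamma$ contribution without appealing to inverse inequalities, and the resulting constant is then absorbed into $C(\textbf{U})$. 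With that adjustment your argument goes through.
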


Below, in order to avoid excessive use of constants, the abbreviation $x\lesssim y$ stand for $x\leqslant c y$, with
$c$ a positive constant independent of $x$, $y$ and $\mathcal{T}_h$.

 \begin{rmq}(\textbf{Galerkin orthogonality relation})
 Let $\textbf{U}=(\textbf{u}_f,p,\textbf{u}_p,\phi)\in \textbf{H}$ be the exact solution and 
 $\textbf{U}_h=(\textbf{u}_{fh},p_h,\textbf{u}_{ph},\phi_h)\in\textbf{H}_h$ be the finite element solution. Then for any 	
 $ \textbf{V}_h=(\textbf{v}_{fh},p_h,\textbf{v}_{ph},\psi_h)\in \textbf{H}_h$, and using technical regularity result Theorem \ref{treg} below,  we can subtract (\ref{Fequivalente}) to  (\ref{discrete}) to obtain the 
 Galerkin  orthogonality  relation:
 \begin{eqnarray*}\nonumber
 \mathcal{L}_h(\textbf{U}-\textbf{U}_h,\textbf{V}_h)&=&\mathcal{L}_h(\textbf{U},\textbf{V}_h)-
 \mathcal{L}_h(\textbf{U}_h,\textbf{V}_h)\\
 &=&\mathcal{L}(\textbf{U},\textbf{V}_h)-\mathcal{L}_h(\textbf{U}_h,\textbf{V}_h)\\
 &=&\mathcal{F}(\textbf{V}_h)-\mathcal{F}(\textbf{V}_h)\\
 &=&0.
 \end{eqnarray*}
 Thus, we have the relation:
 \begin{eqnarray*}
 2\nu\left(\mathbb{D}(\textbf{e}_f),\mathbb{D}(\textbf{v}_{fh})\right)_{\Omega_f}&+&
 \displaystyle\sum_{j=1}^{d-1}\frac{\alpha}{\sqrt{\tau_j\cdot\textbf{K}\cdot\tau_j}}
 \left<\textbf{e}_f\cdot\tau_j,\textbf{v}_{fh}\cdot\tau_j\right>_{\Gamma}-
 \left(\epsilon_p,\nabla\cdot\textbf{v}_{fh}\right)_{\Omega_f}+
 \left(q_h,\nabla\cdot\textbf{e}_f\right)_{\Omega_f}\\
 &+&\rho g\left[\left(\textbf{K}^{-1}\textbf{e}_p,\textbf{v}_{ph}\right)_{\Omega_p}-(\lambda_{\phi},\nabla\cdot\textbf{v}_{ph})_{\Omega_p}+(\psi_h,\nabla\cdot\textbf{e}_p)_{\Omega_p}+
 \left< \lambda_{\phi},[\textbf{v}]\right>_{\Gamma}\right]\\
 &=&0,
 \end{eqnarray*}

where here and below, the errors in the velocity and in the pressure of Stokes equations, and errors in the hydraulic and Darcy velocity equations are respectively defined by:
\begin{eqnarray*}
\textbf{e}_f:=\textbf{u}_f-\textbf{u}_{fh}, \hspace*{0.3cm} \epsilon_p:= p-p_h, \hspace*{0.2cm} 
\textbf{e}_p:=\textbf{u}_p-\textbf{u}_{ph}\mbox{  and   }  \lambda_{\phi}=\phi-\phi_h.
\end{eqnarray*}
\end{rmq}

\section{A posteriori error analysis}\label{sec:R2}
\subsection{Some technical results}
Our a posteriori analysis requires some  analytical results that are recalled. We define the space
$$\mathcal{H}=\left\{\textbf{v}\in \textbf{H}(\dive,\Omega): \textbf{v}_{|\Omega_f}\in \textbf{X}_f \mbox{  and } \textbf{v}_{|\Omega_p}\in \textbf{X}_p \right\}$$ with the norm
$$\parallel \textbf{v}\parallel_{\mathcal{H}}:=\sqrt{|\textbf{v}_f|_{1,\Omega_f}^2+\parallel\textbf{v}_p\parallel_{\Omega_p}^2+\parallel\nabla\cdot\textbf{v}_p\parallel_{\Omega_p}}.$$
The first one concerns a sort of Helmholtz decomposition  of elements of $\mathcal{H}$.
Recall first that if $d=3$,
\[
H_0(\curl, \Omega_p)=\{\psi\in L^2(\Omega_p)^3: \curl \psi\in L^2(\Omega_p)^3 \hbox{ and }
\psi\times \textbf{n}=\textbf{0}\hbox{ on } \partial \Omega_p\}.
\]
\begin{thm}(Ref. \cite[Page 708]{AHN:15})\label{thelmholtz}
	Any $\textbf{v}\in \mathcal{H}$ admits the Helmholtz type decomposition 
	\begin{equation}\label{helmholtz}
	\textbf{v}=\textbf{v}_0+\textbf{v}_1,
	\end{equation}
	where $\textbf{v}_0,\textbf{v}_1 \in \mathcal{H}$ but satisfying $\textbf{v}_0\in H^1(\Omega)^d$,
	\beq\label{defv1}
	\textbf{v}_1=
	\left\{
	\begin{array}{ccc}
		\textbf{0}& \hbox{ in } &\Omega_f,\\
		\curl \beta_p & \hbox{ in } &\Omega_p,
	\end{array}
	\right.
	\eeq
	where $\beta_p\in H^1_0(\Omega_p)$ if $d=2$, while 
	$\beta_p\in H^1(\Omega_p)^3\cap H_0(\curl, \Omega_p)$ if $d=3$, with the estimate
	\begin{equation}\label{helmholtzest}
	\|\textbf{v}_0\|_{1, \Omega}+\|\beta_p\|_{1, \Omega_p}\lesssim \|\textbf{v}\|_{\mathcal{H}}.
	\end{equation}
\end{thm}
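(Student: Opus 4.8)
The goal is to decompose an arbitrary $\textbf{v}\in\mathcal{H}$ as $\textbf{v}=\textbf{v}_0+\textbf{v}_1$ with $\textbf{v}_0\in H^1(\Omega)^d$, $\textbf{v}_1$ supported in $\Omega_p$ and given there by a curl, and with the stability bound \eqref{helmholtzest}. The natural strategy is to build $\textbf{v}_1$ first, so as to correct the lack of global $H^1$-regularity of $\textbf{v}$ (which may have jumps in the tangential trace across $\Gamma$, and a priori only lies in $\textbf{X}_p$ on the porous side). First I would note that since $\textbf{v}\in H(\dive,\Omega)$, the normal trace $\textbf{v}\cdot\textbf{n}$ matches across $\Gamma$, so the obstruction to global $H^1$-regularity is purely tangential and localized in $\Omega_p$. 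I would then solve, on $\Omega_p$ alone, an auxiliary problem producing a potential $\beta_p$ whose curl captures the tangential part of $\textbf{v}_p$ that cannot be extended smoothly across $\Gamma$: concretely, define $\textbf{v}_0$ on $\Omega_p$ by requiring $\dive\textbf{v}_0=\dive\textbf{v}_p$ in $\Omega_p$ and $\textbf{v}_0\cdot\textbf{n}_p=\textbf{v}_p\cdot\textbf{n}_p$ on $\partial\Omega_p$ together with a suitable tangential trace on $\Gamma$ inherited from the Stokes side, then set $\textbf{v}_1=\textbf{v}_p-\textbf{v}_0$ in $\Omega_p$ and $\textbf{v}_1=\textbf{0}$ in $\Omega_f$. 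By construction $\dive\textbf{v}_1=0$ and $\textbf{v}_1\cdot\textbf{n}_p=0$ on $\partial\Omega_p$, which is exactly the condition allowing one to write $\textbf{v}_1=\curl\beta_p$ with $\beta_p\in H^1_0(\Omega_p)$ for $d=2$ (stream function, since $\Omega_p$ polygonal hence simply connected after the standard reduction) and $\beta_p\in H^1(\Omega_p)^3\cap H_0(\curl,\Omega_p)$ for $d=3$, via the classical vector-potential theorem on Lipschitz domains (e.g.\ Girault--Raviart / Amrouche--Bernardi--Dauge--Girault).

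For the $d=3$ case I would be explicit about which vector-potential result I invoke: one uses that a divergence-free field with vanishing normal trace on the whole boundary of a bounded Lipschitz domain admits a vector potential in $H^1(\Omega_p)^3$, and one can further impose the gauge $\beta_p\times\textbf{n}=\textbf{0}$ on $\partial\Omega_p$ (this is where topological assumptions on $\Omega_p$ — connectedness, trivial second Betti number — enter; for a polyhedral $\Omega_p$ these are either automatic or handled by the standard cohomology correction). The map $\textbf{v}_1\mapsto\beta_p$ is continuous: $\|\beta_p\|_{1,\Omega_p}\lesssim\|\textbf{v}_1\|_{0,\Omega_p}\leq\|\textbf{v}_p\|_{0,\Omega_p}+\|\textbf{v}_0\|_{0,\Omega_p}$.

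It then remains to check $\textbf{v}_0\in H^1(\Omega)^d$ globally and the estimate. On $\Omega_f$ set $\textbf{v}_0=\textbf{v}_f\in\textbf{X}_f\subset H^1$; on $\Omega_p$, $\textbf{v}_0$ solves a div-problem with prescribed normal trace on all of $\partial\Omega_p$ and prescribed tangential trace on $\Gamma$ equal to $(\textbf{v}_f)|_\Gamma$, so the two pieces share the full trace on $\Gamma$ and glue into an $H^1(\Omega)^d$ function (no jump in either normal or tangential components). The elliptic regularity / right-inverse-of-divergence estimate on $\Omega_p$ gives $\|\textbf{v}_0\|_{1,\Omega_p}\lesssim\|\dive\textbf{v}_p\|_{0,\Omega_p}+\|\textbf{v}_p\cdot\textbf{n}_p\|_{-1/2,\partial\Omega_p\setminus\Gamma}+\|(\textbf{v}_f)|_\Gamma\|_{1/2,\Gamma}$, and the trace theorem bounds the last term by $|\textbf{v}_f|_{1,\Omega_f}$ (plus lower order, absorbed). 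Summing the contributions over $\Omega_f$ and $\Omega_p$ and using the definition of $\|\cdot\|_{\mathcal{H}}$ yields \eqref{helmholtzest}. The main obstacle, and the step I would spend the most care on, is the $d=3$ vector-potential construction with the correct boundary gauge on the porous subdomain and the verification that the matched traces on $\Gamma$ genuinely produce a function in $H^1(\Omega)^d$ rather than merely piecewise $H^1$ — this is precisely where the polyhedral geometry of $\Omega_p$ and the compatibility of traces across $\Gamma$ must be used, and where the cited reference \cite[Page 708]{AHN:15} does the real work; I would mirror that argument.
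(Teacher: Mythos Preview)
The paper itself gives no proof of this theorem: it is stated with the attribution ``(Ref.\ \cite[Page 708]{AHN:15})'' and the argument is entirely deferred to that reference. Your sketch is a correct outline of the standard construction carried out there---build an $H^1$ lift $\textbf{v}_0$ on $\Omega_p$ matching the full trace of $\textbf{v}_f$ on $\Gamma$ and the normal trace of $\textbf{v}_p$ on $\partial\Omega_p$, set $\textbf{v}_1=\textbf{v}_p-\textbf{v}_0$, then invoke the stream-function/vector-potential theorem for divergence-free fields with vanishing normal trace---and you correctly flag the two genuinely delicate points (the $d=3$ gauge $\beta_p\times\textbf{n}=\textbf{0}$ with its topological hypotheses, and the trace compatibility at $\overline\Gamma\cap\overline{\Gamma}_p$ needed for the $H^1$ gluing). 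Since you already plan to mirror \cite{AHN:15} for those details, there is nothing to correct; your write-up in fact goes further than the present paper, which simply quotes the result.
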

The second result that we need is a regularity result for the solution $\textbf{U}=(\textbf{u}_f,p,\textbf{u}_p,\phi)\in\textbf{H}$ of (\ref{Fequivalente}) is the following theorem:

\begin{thm}(\cite[Page 710]{AHN:15})\label{treg}
	Let $\textbf{U}\in 
	\textbf{H}$ be the unique solution  of (\ref{Fequivalente}).
	If $\textbf{K}\in [C^{0,1}(\bar \Om_p)]^{d\times d}$, then 
	there exists $\epsilon>0$ such that
	\[
	\textbf{u}_{|\Omega_p}\in [H^{\frac12+\epsilon}(\Om_p)]^d.
	\]
\end{thm}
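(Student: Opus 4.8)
The statement is the regularity result of Theorem \ref{treg}: for the solution $\textbf{U}=(\textbf{u}_f,p,\textbf{u}_p,\phi)$ of (\ref{Fequivalente}), one has $\textbf{u}_{|\Omega_p}\in [H^{1/2+\epsilon}(\Omega_p)]^d$ for some $\epsilon>0$, provided $\textbf{K}$ is Lipschitz on $\bar\Omega_p$. The plan is to extract from the Darcy subsystem (\ref{I2}) a scalar elliptic equation for the piezometric head $\phi$ with Lipschitz (hence $L^\infty$) coefficient matrix, apply elliptic regularity on the polygonal/polyhedral domain $\Omega_p$ to gain a little more than $H^1$ for $\phi$, and then transfer this gain to $\textbf{u}_p=-\textbf{K}\nabla\phi$ using the product rule together with the Lipschitz regularity of $\textbf{K}$.

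\textbf{Step 1: reduce to a scalar problem for $\phi$.} Eliminating $\textbf{u}_p$ from (\ref{I2}) gives $-\nabla\cdot(\textbf{K}\nabla\phi)=f_p$ in $\Omega_p$, with the Neumann-type condition $\textbf{u}_p\cdot\textbf{n}_p=0$, i.e. $(\textbf{K}\nabla\phi)\cdot\textbf{n}_p=0$, on $\Gamma_p$ and a nonhomogeneous datum on $\Gamma$ coming from the coupling conditions (\ref{cd1})--(\ref{cd3}) (the trace of $\phi$, resp. of the normal velocity, appearing through $c_\Gamma$). Since $\textbf{K}$ is SPD and uniformly bounded with $0\le k_{\min}\le\lambda(\textbf{K})\le k_{\max}<\infty$, this is a uniformly elliptic scalar problem. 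From well-posedness of (\ref{Fequivalente}) we already know $\phi\in H^1(\Omega_p)$, $\textbf{u}_p\in H(\dive,\Omega_p)$, and the boundary/interface data are at least in the natural trace spaces. First I would record these facts carefully and phrase the $\phi$-problem as a standard mixed/primal Darcy problem with $L^\infty$ coefficient.

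\textbf{Step 2: elliptic regularity with rough coefficient on a polyhedron.} I would then invoke the shift theorem for second-order divergence-form elliptic operators with Lipschitz-continuous coefficients on a bounded polygonal ($d=2$) or polyhedral ($d=3$) domain: there exists $\epsilon>0$, depending only on the geometry (opening angles/edges of $\Omega_p$) and on the ellipticity constants $k_{\min},k_{\max}$ and the Lipschitz seminorm of $\textbf{K}$, such that $\phi\in H^{3/2+\epsilon}(\Omega_p)$ (this is exactly the statement used in \cite[Page 710]{AHN:15}, and is the classical Jerison--Kenig / Grisvard / Dauge type regularity). The Lipschitz hypothesis on $\textbf{K}$ is precisely what is needed to avoid losing the $H^{3/2}$ shift: a merely continuous or $L^\infty$ coefficient would only give $H^{1+\epsilon}$ for $\phi$, which is not enough below.

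\textbf{Step 3: transfer to $\textbf{u}_p$ and conclude.} Finally, from $\textbf{u}_p=-\textbf{K}\nabla\phi$ I would differentiate: $\nabla\phi\in[H^{1/2+\epsilon}(\Omega_p)]^d$ by Step 2, and multiplication by $\textbf{K}\in[C^{0,1}(\bar\Omega_p)]^{d\times d}$ preserves the space $H^{s}$ for $0\le s\le 1$ (product rule: $H^{1/2+\epsilon}\cap L^\infty$ functions times Lipschitz functions stay in $H^{1/2+\epsilon}$, using that $1/2+\epsilon<1$), hence $\textbf{u}_{p}=\textbf{u}_{|\Omega_p}\in[H^{1/2+\epsilon}(\Omega_p)]^d$, with the quantitative bound $\|\textbf{u}_p\|_{1/2+\epsilon,\Omega_p}\lesssim \|f_p\|_{0,\Omega_p}+\|\textbf{U}\|_{\textbf{H}}$. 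The main obstacle is Step 2: one must be careful that the constant $\epsilon$ is genuinely positive for the given polygonal/polyhedral $\Omega_p$ (corner/edge singularities can force $\epsilon$ small but, crucially, never zero for this class of domains) and that the nonhomogeneous interface datum on $\Gamma$ inherited from the Stokes side is regular enough not to degrade the shift — this is where one leans on the companion Helmholtz decomposition Theorem \ref{thelmholtz} and the coupled well-posedness, exactly as in \cite{AHN:15}. Steps 1 and 3 are routine once Step 2 is in hand.
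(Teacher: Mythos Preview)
The paper does not prove Theorem~\ref{treg}; it merely states the result and attributes it to \cite[Page 710]{AHN:15}, so there is no proof in the present paper to compare your proposal against.

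Your outline is a reasonable route to such a regularity result and is in the spirit of the argument in \cite{AHN:15}: eliminate $\textbf{u}_p$ to obtain a scalar elliptic equation for $\phi$ with Lipschitz coefficient matrix $\textbf{K}$, invoke a shift theorem on the polygonal/polyhedral domain $\Omega_p$ to get $\phi\in H^{3/2+\epsilon}(\Omega_p)$, and then recover $\textbf{u}_p=-\textbf{K}\nabla\phi\in [H^{1/2+\epsilon}(\Omega_p)]^d$ via the multiplier property of Lipschitz functions on $H^s$ for $0<s<1$. The one genuinely delicate point, which you correctly flag but do not resolve, is the regularity of the Neumann datum on $\Gamma$: the interface condition (\ref{cd1}) gives $(\textbf{K}\nabla\phi)\cdot\textbf{n}_p=\textbf{u}_f\cdot\textbf{n}_f$ on $\Gamma$, and for the $H^{3/2+\epsilon}$ shift to go through one needs this datum in $H^\epsilon(\Gamma)$, which in turn requires some extra regularity of $\textbf{u}_f$ on the Stokes side. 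In \cite{AHN:15} this is handled by analyzing the coupled transmission problem as a whole (using the Grisvard/Dauge corner-singularity theory referenced there), not by treating the Darcy block in isolation; your sketch defers this to the reference, which is acceptable given that the present paper does the same.
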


Let us finish this section by  an estimation of the stability error (see \cite[Theorem 3.3]{AHN:15}):
\begin{thm}
	\label{tnonconferror}
	For any $ \textbf{U}_h=(\textbf{u}_{fh},p_h,\textbf{u}_{ph},\phi_h) \in  \textbf{H}_h $ we have
	\begin{equation}\label{nonconferror}
	\inf_{\textbf{W}_h\in \textbf{H}_h\cap \mathcal{H}} \|\textbf{U}_h-\textbf{W}_h \|_h^2
	\lesssim \textbf{J}_{\Gamma}(\textbf{U}_h,\textbf{U}_h).
	\end{equation}
\end{thm}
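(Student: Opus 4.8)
The plan is to construct, for a given $\textbf{U}_h = (\textbf{u}_{fh}, p_h, \textbf{u}_{ph}, \phi_h) \in \textbf{H}_h$, an explicit element $\textbf{W}_h \in \textbf{H}_h \cap \mathcal{H}$ that differs from $\textbf{U}_h$ only in the Darcy velocity component, and to bound the resulting norm difference by the jump term. The key observation is that the only obstruction to $\textbf{U}_h$ lying in $\mathcal{H}$ is the failure of the normal-trace continuity $(\textbf{u}_{fh} - \textbf{u}_{ph})\cdot\textbf{n}_f = 0$ across $\Gamma$: membership in $\mathcal{H}$ requires $\textbf{v} \in \textbf{H}(\dive,\Omega)$, which for a piecewise-defined field amounts precisely to the matching of normal traces on $\Gamma$. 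So I would keep $\textbf{u}_{fh}$, $p_h$, $\phi_h$ unchanged and seek $\textbf{w}_{ph} \in \textbf{X}_{ph}$ so that $\textbf{w}_{ph}\cdot\textbf{n}_f = \textbf{u}_{fh}\cdot\textbf{n}_f$ on $\Gamma$, i.e. so that $\textbf{W}_h := (\textbf{u}_{fh}, p_h, \textbf{w}_{ph}, \phi_h)$ satisfies the normal-continuity condition. Since the $BDM1$ space on $\cT_h^p$ has normal degrees of freedom on each face, and the two meshes coincide on $\Gamma$, one can correct $\textbf{u}_{ph}$ face-by-face along $\Gamma$ to absorb the jump; the natural choice is to set $\textbf{w}_{ph} = \textbf{u}_{ph} + \textbf{z}_h$, where $\textbf{z}_h \in \textbf{X}_{ph}$ is supported in the one layer of elements of $\cT_h^p$ touching $\Gamma$ and its normal trace on each interface face $e$ equals the jump $(\textbf{u}_{fh} - \textbf{u}_{ph})\cdot\textbf{n}_f$ there (with zero normal component on all other faces).

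The main work is then to estimate $\|\textbf{U}_h - \textbf{W}_h\|_h^2 = \|\textbf{z}_h\|_{\dive}^2 + h^{-1}\|\textbf{z}_h\|_{\Gamma}^2$ (the other four components of the norm vanish since only the Darcy velocity was changed, and note $(\textbf{v}_f^h - \textbf{v}_p^h)$ in the last norm slot becomes $-\textbf{z}_h$ on $\Gamma$, supported there). I would proceed by a scaling/inverse-estimate argument on the boundary layer of elements: on a reference element, the $BDM1$ field whose only nonzero normal-trace data is $g$ on one face has $L^2$ and $\dive$ norms controlled by the $L^2(e)$ norm of $g$; transforming back via the standard affine/Piola maps and using shape-regularity ($c_1 h \le h_K \le c_2 \rho_K$) gives $\|\textbf{z}_h\|_{0,K}^2 \lesssim h_e \|g\|_{0,e}^2$ and $\|\nabla\cdot\textbf{z}_h\|_{0,K}^2 \lesssim h_e^{-1}\|g\|_{0,e}^2$ for each boundary element $K$ with interface face $e$, and $\|\textbf{z}_h\|_{0,e}^2 \lesssim \|g\|_{0,e}^2$. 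Summing over the interface faces, and using $h_e \le h \le 1$, yields
\begin{equation*}
\|\textbf{z}_h\|_{\dive}^2 + h^{-1}\|\textbf{z}_h\|_{\Gamma}^2 \lesssim h^{-1}\|(\textbf{u}_{fh} - \textbf{u}_{ph})\cdot\textbf{n}_f\|_{\Gamma}^2 = \textbf{J}_{\Gamma}(\textbf{U}_h,\textbf{U}_h)/\delta,
\end{equation*}
which is the desired bound (absorbing $\delta$ into the implied constant, or keeping it explicit if one prefers).

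A couple of technical points deserve care. First, one must verify that the corrected field still lies in $\textbf{X}_{ph}$, i.e. that $\textbf{z}_h$ has vanishing normal component on $\Gamma_p$; this is ensured by choosing $\textbf{z}_h$ to carry nonzero normal data only on the interface faces. Second, one should check $\textbf{W}_h \in \mathcal{H}$ fully: $\textbf{w}_{ph} \in \textbf{X}_p \subset H(\dive,\Omega_p)$ is clear, $\textbf{u}_{fh} \in \textbf{X}_f$ with $\textbf{u}_{fh} \in H^1(\Omega_f)^d$ holds by construction of the $P1b$ space, and the matched normal traces give $\textbf{W}_h \in H(\dive,\Omega)$. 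The main obstacle is the scaling estimate for the $BDM1$ correction field on the boundary layer — one needs the norm equivalences on the reference element for the specific $BDM1$ basis together with the correct powers of $h$ under the Piola transform; the divergence bound $h_e^{-1}$ is the one that interacts delicately with the $h^{-1}$ weight in $\|\cdot\|_h$, but since that weight only multiplies the boundary $L^2$ term (not the $\dive$ term) the powers work out. Everything else is bookkeeping over the interface faces and invoking shape-regularity to pass between $h$, $h_K$, and $h_e$.
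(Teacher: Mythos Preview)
Your approach is correct and is the natural one for this type of estimate. Note, however, that the paper itself does not give a proof of this theorem: it merely quotes the result from \cite[Theorem~3.3]{AHN:15}. So there is no in-paper proof to compare against.

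That said, your construction is essentially the standard argument one expects in the cited reference: correct only the Darcy velocity by a $BDM_1$ field $\textbf{z}_h$ supported in the layer of porous-side elements touching $\Gamma$, with normal trace on each interface face equal to the jump $(\textbf{u}_{fh}-\textbf{u}_{ph})\cdot\textbf{n}_f$. Two small points worth making explicit in your write-up. First, the jump is indeed a $P_1$ function on each interface face (the cubic bubble of the MINI element vanishes on faces, so $\textbf{u}_{fh}\cdot\textbf{n}_f|_E\in P_1(E)$, and $\textbf{u}_{ph}\cdot\textbf{n}_f|_E\in P_1(E)$ for $BDM_1$), so the $BDM_1$ degrees of freedom can absorb it exactly. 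Second, the bound $\|\nabla\cdot\textbf{z}_h\|_{0,K}^2\lesssim h_E^{-1}\|g\|_{0,E}^2$ produces a factor $h_E^{-1}$, not $h^{-1}$; you need the quasi-uniformity assumption $c_1 h\le h_K$ stated in the paper to convert $h_E^{-1}\lesssim h^{-1}$, otherwise the divergence term would not be controlled by $\textbf{J}_\Gamma$. You implicitly used this when you wrote ``using $h_e\le h\le 1$'' for the $\|\textbf{z}_h\|_{0,K}$ term, but for the divergence term the inequality goes the other way and quasi-uniformity is genuinely needed. With that in place your scaling argument closes.
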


\subsection{Error estimator}In order to  solve the Stokes-Darcy coupled problem by efficient adaptive finite element methods, reliable and efficient 
a posteriori error analysis is important to provide appropriated indicators. 
In this section, we first define the local and global indicators and then the lower and upper error  bounds are derived in Section \ref{mainresul}. 
\subsubsection{Error equations}
The general philosophy of residual error estimators is to estimate an appropriate norm of the correct residual by terms that 
can be  evaluated easier, and that involve the data at hand. Thus we define the error equations:
Let $\textbf{U}=(\textbf{u}_f,p,\textbf{u}_p,\phi)\in \textbf{H}$ be the exact solution and 
$\textbf{U}_h=(\textbf{u}_{fh},p_h,\textbf{u}_{ph},\phi_h)\in\textbf{H}_h$ be the finite element solution. Then for any $\textbf{V}_h=(\textbf{v}_{fh},q_h,\textbf{v}_{ph},\psi_h)\in\textbf{H}_h$ and 
$\textbf{V}=(\textbf{v}_f,p,\textbf{v}_p,\psi)\in\textbf{H}$, using the Helmholtz decomposition (Theorem \ref{thelmholtz}) we have: 
\begin{eqnarray} \label{errorequation}
\mathcal{L}_h(\textbf{U}-\textbf{U}_h,\textbf{V})&=&\mathcal{L}_h(\textbf{U}-\textbf{U}_h,\textbf{V}-\textbf{V}_h)\\\nonumber
&=&\left[\displaystyle\sum_{K\in\cT_h^f}\left(\textbf{R}_K^f(\textbf{U}_h),\textbf{V}-\textbf{V}_h)\right)_{K}+\displaystyle\sum_{K\in\cT_h^p}\left(\textbf{R}_K^p(\textbf{U}_h),\textbf{V}-\textbf{V}_h)\right)_{K}\right],
\end{eqnarray}
where
\begin{eqnarray}\label{errorequations}
\left(\textbf{R}_K^f(\textbf{U}_h),\textbf{V}-\textbf{V}_h\right)_K&=&
\left(\textbf{f}_{f}+2\nu\nabla\cdot\mathbb{D}(\textbf{u}_{fh})-\nabla p_h,\textbf{v}_{f}-\textbf{v}_{fh}\right)_K-\left(q-q_h,\nabla\cdot\textbf{u}_{fh}\right)_K\\\nonumber
&-&\left[\displaystyle\sum_{E\in\cE_h(\partial K\cap \Gamma)}\displaystyle
\sum_{j=1}^{d-1}\left(2\nu\textbf{n}_f\cdot\mathbb{D}(\textbf{u}_{fh})\cdot\tau_j+\frac{\alpha}{\sqrt{\tau_j\cdot\textbf{K}\cdot\tau_j}}\textbf{u}_{fh}\cdot\tau_j,(\textbf{v}_{f}-\textbf{v}_{fh})\cdot\tau_j\right)_E\right]\\\nonumber
&+&\displaystyle\left[\sum_{E\in\cE_h(\partial K\cap \Gamma)}\left(
p_h-2\nu\textbf{n}_f\cdot\mathbb{D}(\textbf{u}_{fh})\cdot\textbf{n}_{f}-\rho g \phi_h,(\textbf{v}_f-\textbf{v}_{fh})\cdot\textbf{n}_f\right)_E\right],
\end{eqnarray}
and 
\begin{eqnarray}\nonumber
\left(\textbf{R}_K^p(\textbf{U}_h),\textbf{V}-\textbf{V}_h\right)_K&=&
\left(\curl (\rho g\textbf{K}^{-1}\textbf{u}_{ph}+\nabla\phi_h),\beta_p-\beta_{ph}\right)_K\\\nonumber
&+&\left(\rho g(f_p-\nabla\cdot\textbf{u}_{ph}),\psi-\psi_h\right)_K\\\nonumber\label{errorequationd}
&-&\displaystyle\sum_{E\in\cE_h(\partial K\cap \Omega_p)}
\left(\rho g \textbf{K}^{-1}\textbf{u}_{ph}+\nabla\phi_h)\times \textbf{n}_E,\psi-\psi_h\right)_E\\
&+&\displaystyle\sum_{E\in\cE_h(\partial K\cap \Omega_p)}\left([\rho g\phi_h\textbf{n}_E]_E,\beta_p-\beta_{ph}\right)_E\\\nonumber
&-&\displaystyle\sum_{E\in\cE_h(\partial K\cap \Gamma)}\left([\rho g\phi_h\textbf{n}_E]_E,[\textbf{v}-\textbf{v}_h]_E\right)_E.
\end{eqnarray}

\subsubsection{Residual Error Estimators}
\begin{dfn}[\textbf{A posteriori error indicators}]
	The residual error estimator is locally defined by:
	\begin{equation}
		\Theta_K=\left[\Theta_{K,f}^2+\Theta_{K,p}^2\right]^{\frac{1}{2}} \mbox{  for each  } K\in\cT_h,
	\end{equation}
where 
\begin{eqnarray}\nonumber\label{Indf}
	\Theta_{K,f}^2&=&h_K^2\parallel \textbf{f}_{fh}+2\nu\nabla\cdot\mathbb{D}(\textbf{u}_{fh})-\nabla p_h\parallel_K^2+\parallel \nabla\cdot \textbf{u}_{fh}\parallel_K^2\\
	&+&\displaystyle\sum_{E\in\cE_h(\partial K\cap\Gamma)}h_E\left\{ \displaystyle\sum_{j=1}^{d-1}
	\parallel 2\nu\textbf{n}_f\cdot\mathbb{D}(\textbf{u}_{fh})\cdot\tau_j+\frac{\alpha}{\sqrt{\tau_j\cdot\textbf{K}\cdot\tau_j}}\textbf{u}_{fh}\cdot\tau_j\parallel_E^2\right\}\\\nonumber
	&+&\displaystyle\sum_{E\in\cE_h(\partial K\cap\Gamma)}h_E\parallel p_h-2\nu\textbf{n}_f\cdot\mathbb{D}(\textbf{u}_{fh})\cdot\textbf{n}_f-\rho g\phi_h\parallel_E^2
\end{eqnarray}
and 
	\begin{eqnarray}\nonumber\label{Indp}
		\Theta_{K,p}^2&=& h_K^2\parallel \curl (\rho g\textbf{K}^{-1}\textbf{u}_{ph}+\nabla \phi_h)\parallel_K^2+\parallel \rho g(f_p-\nabla\cdot\textbf{u}_{ph})\parallel_K^2\\
		&+&\displaystyle\sum_{E\in\cE_h(\partial K\cap \Omega_p)}\parallel 
		[\rho g(\textbf{K}^{-1}\textbf{u}_{ph}+\nabla\phi_h)\times \textbf{n}_p]_E\parallel_E^2\\\nonumber
		&+&\displaystyle\sum_{E\in\cE_h(\partial K\cap \overline{\Omega}_p)}h_E\parallel[\rho g\phi_h\textbf{n}_p]_E\parallel_E^2+
		\displaystyle\sum_{E\in\cE_h(\partial K\cap \Gamma)}\frac{\delta h_E}{h}\parallel[(\textbf{u}_{fh}-\textbf{u}_{ph})\cdot\textbf{n}_f]_E\parallel_E^2.
	\end{eqnarray}

The global residual error estimator is given by: 
\begin{eqnarray}
\Theta:=\left[\sum_{K\in\mathcal{T}_h}\Theta_K^2\right]^{\frac{1}{2}}.
\end{eqnarray}
Furthermore denote the local and global approximation terms by 
\begin{eqnarray*}
	\zeta_{K}:=
	\left\{
	\begin{array}{cc}
		h_{K}\parallel \textbf{f}_f-\textbf{f}_{fh}\parallel_{K}
		&\forall K\in \mathcal{T}_h^f,
		\\
	\rho g\parallel f_p-f_{ph}\parallel_K
		&\forall K\in \mathcal{T}_h^p,
	\end{array}
	\right.
\end{eqnarray*}
and 
\begin{eqnarray}
\zeta:=\left[\sum_{K\in\mathcal{T}_h}\zeta_K^2\right]^{\frac{1}{2}},
\end{eqnarray}
where the global function $\textbf{f}_{fh}: \Omega_f\rightarrow \mathbb{R}^d$ is defined by:
$${\textbf{f}_{fh}}_{|K}=\textbf{f}_K=\frac{1}{|K|}\int_K \textbf{f}_f(x)dx \mbox{   }\forall K\in\cT_h^f,$$
while in $\Omega_p$, we take ${f_h}_{|K}=f_K$ for all $K\in \cT_h^p$, as the unique element of $\mathbb{P}^1(K)$ such that:
$$\int_K f_K(x)q(x)dx=\int_K f(x)q(x)dx\mbox{   } \forall q\in \mathbb{P}^1(K).$$
\end{dfn}

\begin{rmq}
	The residual character of each term on the right-hand sides of (\ref{Indf}) and (\ref{Indp}) is quite clear
	since if $(\textbf{u}_{fh}, p_h,\textbf{u}_{ph},\phi_h)\in\textbf{H}_h$ would be the exact solution of (\ref{Fequivalente}), then they would vanish.
\end{rmq}

\subsubsection{Analytical tools}
\begin{enumerate}
\item \textbf{Inverse inequalities:} In order to derive the lower error bounds, we proceed similarly as in \cite{Ca:97} and 
\cite{carstensenandall} (see also \cite{15}), by applying
inverse inequalities, and the localization technique based on simplex-bubble and face-bubble functions. To this end, we 
recall some notation and introduce further preliminary results. Given $K\in \mathcal{T}_h$, and 
$E\in \cE(K)$,
we let $b_K$ and $b_E$ be the usual simplexe-bubble and face-bubble 
functions respectively (see (1.5) and (1.6) in \cite{verfurth:96b}). In particular, $b_K$ satisfies 
$b_K\in \mathbb{P}^3(K)$, $supp(b_K)\subseteq K$, $b_K=0 \mbox{ on } \partial K$, and $0\leq b_K\leq 1\mbox{ on } K $.
Similarly, $b_E\in \mathbb{P}^2(K)$, $supp(b_E)\subseteq 
\omega_E:=\left\{K'\in \mathcal{T}_h:  E\in\cE (K')\right\}$, 
$b_E=0\mbox{  on  } \partial K\smallsetminus E$ and $0\leq b_E\leq 1\mbox{ in } \omega_E$.
We also recall from \cite{verfurth:94a} that, given $k\in\mathbb{N}$, there exists an extension operator
$L: C(E)\longrightarrow C(K)$ that satisfies $L(p)\in \mathbb{P}^k(K)$ and $L(p)_{|E}=p, \forall p\in \mathbb{P}^k(E)$.
A corresponding vectorial version of $L$, that is, the componentwise application of $L$, is denoted by 
$\textbf{L}$. Additional properties of $b_K$, $b_E$ and $L$ are collected in the following lemma (see \cite{verfurth:94a})

\begin{lem}
	Given $k\in \mathbb{N}^*$, there exist positive constants depending only on $k$ and shape-regularity of the triangulations 
	(minimum angle condition), such that for each simplexe $K$ and $E\in \cE(K)$ there hold
	\begin{eqnarray}\label{cl1}
	\parallel q \parallel_{K}&\lesssim&\parallel qb_K^{1/2}\parallel_{K}\lesssim
	\parallel q\parallel_{K}, \forall q\in \mathbb{P}^k(K)\\\label{cl2}
	|q b_K|_{1,K}&\lesssim&  h_K^{-1}\parallel q \parallel_{K}, \forall q\in \mathbb{P}^k(K)\\\label{cl3}
	\parallel p\parallel_{E}&\lesssim&\parallel b_E^{1/2}p\parallel_{E}\lesssim \parallel p\parallel_{E},
	\forall p\in \mathbb{P}^k(E)\\\label{cl4}
	\parallel L(p)\parallel_{K} +h_E|L(p)|_{1,K}&\lesssim& h_E^{1/2}\parallel p\parallel_{E}
	\forall p\in \mathbb{P}^k(E)
	\end{eqnarray}
\end{lem}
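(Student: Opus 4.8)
The plan is to prove all four estimates by the classical reference-element scaling argument: transport each quantity to a fixed reference simplex $\widehat{K}$ (or reference face $\widehat{E}$), use equivalence of norms on finite-dimensional polynomial spaces together with a standard inverse inequality there, and then scale back, invoking the shape-regularity (minimum-angle) hypothesis only to control the norms of the affine maps and their inverses.

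First I would handle \eqref{cl1}. Let $F_K\colon\widehat{K}\to K$, $\widehat{x}\mapsto B\widehat{x}+b$, be the affine bijection onto $K$, so that $|\det B|\sim h_K^d$, and set $\widehat{q}=q\circ F_K\in\mathbb{P}^k(\widehat{K})$; by construction $b_K\circ F_K=b_{\widehat{K}}$. Since $b_{\widehat{K}}$ is continuous, vanishes on $\partial\widehat{K}$ and is strictly positive in the interior, the map $\widehat{q}\mapsto\parallel\widehat{q}\,b_{\widehat{K}}^{1/2}\parallel_{\widehat{K}}$ is a genuine norm on the finite-dimensional space $\mathbb{P}^k(\widehat{K})$, hence equivalent to $\parallel\cdot\parallel_{\widehat{K}}$ with constants depending only on $k$ and $\widehat{K}$ (the upper bound being trivial because $0\le b_{\widehat{K}}\le1$). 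A change of variables gives $\parallel q\parallel_K^2=|\det B|\,\parallel\widehat{q}\parallel_{\widehat{K}}^2$ and $\parallel q\,b_K^{1/2}\parallel_K^2=|\det B|\,\parallel\widehat{q}\,b_{\widehat{K}}^{1/2}\parallel_{\widehat{K}}^2$, so the Jacobian cancels in the ratio and \eqref{cl1} follows. Estimate \eqref{cl3} is proved the same way on a reference face $\widehat{E}$ with the (restriction to $\widehat{E}$ of the) face-bubble $b_{\widehat{E}}$, the only difference being that the face Jacobian ($\sim h_E^{d-1}$) cancels instead.

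For \eqref{cl2} the key observation is that $q\,b_K\in\mathbb{P}^{k+3}(K)$ is an honest polynomial (while $q\,b_K^{1/2}$ is not), so on $\widehat{K}$ the inverse inequality $|\widehat{q}\,b_{\widehat{K}}|_{1,\widehat{K}}\lesssim\parallel\widehat{q}\,b_{\widehat{K}}\parallel_{\widehat{K}}$ holds on the finite-dimensional space $\mathbb{P}^{k+3}(\widehat{K})$; combined with $\parallel\widehat{q}\,b_{\widehat{K}}\parallel_{\widehat{K}}\le\parallel\widehat{q}\parallel_{\widehat{K}}$ this yields $|\widehat{q}\,b_{\widehat{K}}|_{1,\widehat{K}}\lesssim\parallel\widehat{q}\parallel_{\widehat{K}}$. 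Since $\nabla_x(q\,b_K)=B^{-T}\nabla_{\widehat{x}}(\widehat{q}\,b_{\widehat{K}})$ and $\parallel B^{-1}\parallel\lesssim\rho_K^{-1}\lesssim h_K^{-1}$ by shape-regularity, one gets $|q\,b_K|_{1,K}^2\le\parallel B^{-1}\parallel^2|\det B|\,|\widehat{q}\,b_{\widehat{K}}|_{1,\widehat{K}}^2\lesssim h_K^{-2}|\det B|\,\parallel\widehat{q}\parallel_{\widehat{K}}^2=h_K^{-2}\parallel q\parallel_K^2$, which is \eqref{cl2}.

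Finally, for \eqref{cl4} I would use that, because $\widehat{L}(\widehat{p})_{|\widehat{E}}=\widehat{p}$, the linear extension $\widehat{p}\mapsto\widehat{L}(\widehat{p})$ from $\mathbb{P}^k(\widehat{E})$ into $\mathbb{P}^k(\widehat{K})$ is injective, so by finite-dimensionality $\parallel\widehat{L}(\widehat{p})\parallel_{\widehat{K}}+|\widehat{L}(\widehat{p})|_{1,\widehat{K}}\lesssim\parallel\widehat{p}\parallel_{\widehat{E}}$. Scaling the volume norm by $|\det B|\sim h_K^d$, the gradient by $\parallel B^{-1}\parallel^2|\det B|\lesssim h_K^{d-2}$, and the right-hand face norm by the face Jacobian $\sim h_E^{d-1}$, and using $h_K\sim h_E$ for $E\in\cE(K)$ (shape-regularity), this gives $\parallel L(p)\parallel_K\lesssim h_E^{1/2}\parallel p\parallel_E$ and $|L(p)|_{1,K}\lesssim h_E^{-1/2}\parallel p\parallel_E$; adding $h_E$ times the second to the first produces \eqref{cl4}. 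I do not expect any genuine obstacle here, since the argument is entirely standard; the only points needing care are the bookkeeping of the powers of $h_K$ and $h_E$ through the various volume and face Jacobians, together with the remark, used for \eqref{cl1} and \eqref{cl3}, that $b_K^{1/2}$ and $b_E^{1/2}$ still induce genuine norms on the polynomial spaces even though they are not themselves polynomials.
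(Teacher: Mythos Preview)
Your argument is correct and is precisely the classical reference-element scaling proof. The paper itself does not prove this lemma at all; it merely cites Verf\"urth \cite{verfurth:94a}, where exactly this affine-scaling argument is carried out. So there is no ``paper's own proof'' to compare against beyond the reference, and your write-up is essentially that reference's proof.

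Two very minor remarks. First, in your treatment of \eqref{cl4} you invoke injectivity of $\widehat{L}$ to obtain $\parallel\widehat{L}(\widehat{p})\parallel_{\widehat{K}}+|\widehat{L}(\widehat{p})|_{1,\widehat{K}}\lesssim\parallel\widehat{p}\parallel_{\widehat{E}}$, but injectivity is not what is needed here: any linear map out of a finite-dimensional normed space is automatically bounded, which is all you use. (Injectivity would give the reverse inequality.) Second, you are tacitly assuming that the extension operator $L$ on $K$ is the affine pull-back of a fixed reference extension $\widehat{L}$; since the lemma only asserts the \emph{existence} of an operator $L$ with the stated bounds, this is legitimate---one simply defines $L$ that way---but it is worth saying explicitly. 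Neither point affects the validity of your proof.
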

\item \textbf{Continuous trace inequality}
\begin{lem}\label{lemd1}
	(Continuous trace inequality)
	There exists a positive constant  $\beta_1> 0$ depending only on  $\sigma_0$ such that 
	\begin{eqnarray}
	\parallel \textbf{v}\parallel_{\partial K}^2&\leqslant &\beta_1 \parallel\textbf{v}\parallel_{K}
	\parallel\textbf{v}\parallel_{1,K}, \mbox{  } \forall K\in \mathcal{T}_h, \forall \textbf{v}\in [H^1(K)]^d.
	\end{eqnarray}
\end{lem}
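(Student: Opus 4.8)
The plan is to establish the multiplicative (product-type) trace inequality on a single simplex $K$ directly by an integration-by-parts argument, and then to read off the constant $\beta_1$ from the shape-regularity of $\mathcal{T}_h$. The geometric fact that makes everything work on a simplex is the following: if $x_K$ denotes the incenter of $K$, then for every face $E$ of $K$ the quantity $(x-x_K)\cdot\textbf{n}_E$ is constant along $E$ and equal to the inradius $\rho_K>0$ (the distance from the incenter to each face). I would record this as the starting observation, together with the trivial bound $|x-x_K|\le h_K$ for $x\in K$.

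First I would apply the divergence theorem to the vector field $v_i^2\,(x-x_K)$ on $K$ for each component $v_i$ of $\textbf{v}$, $i=1,\dots,d$. Using
\[
\dive\!\big(v_i^2(x-x_K)\big)=d\,v_i^2+2\,v_i\,(x-x_K)\cdot\nabla v_i
\]
and evaluating the boundary integral face by face (where $(x-x_K)\cdot\textbf{n}_E=\rho_K$), one obtains after summation over $i$
\[
\rho_K\,\|\textbf{v}\|_{\partial K}^2 =\int_K\Big(d\,|\textbf{v}|^2+2\sum_{i=1}^d v_i\,(x-x_K)\cdot\nabla v_i\Big)\,dx .
\]
Bounding $|x-x_K|\le h_K$ and applying the Cauchy--Schwarz inequality (first inside the integral, then over the components) gives $\rho_K\,\|\textbf{v}\|_{\partial K}^2\le d\,\|\textbf{v}\|_K^2+2h_K\,\|\textbf{v}\|_K\,|\textbf{v}|_{1,K}$, and since $\|\textbf{v}\|_K\le\|\textbf{v}\|_{1,K}$ and $|\textbf{v}|_{1,K}\le\|\textbf{v}\|_{1,K}$, this already yields
\[
\|\textbf{v}\|_{\partial K}^2\le \rho_K^{-1}\,(d+2h_K)\,\|\textbf{v}\|_K\,\|\textbf{v}\|_{1,K}.
\]

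Finally I would invoke the regularity of the family, $c_1h\le h_K\le c_2\rho_K$, so that $\rho_K^{-1}(d+2h_K)$ is controlled by a constant $\beta_1$ that depends only on the shape-regularity parameter $\sigma_0$ (and the dimension), which is the asserted estimate. The delicate point is precisely this last step: one must make sure that the incenter lies in $K$ and that $\rho_K$, $h_K$ and the $(d-1)$-dimensional face measures hidden in $\|\cdot\|_{\partial K}$ are all mutually comparable with constants governed only by $\sigma_0$. If one prefers, the same conclusion follows by proving the product trace inequality on the fixed reference simplex $\widehat K$ --- either by the divergence-theorem trick above, or from the Gagliardo trace theorem together with a standard density argument --- and then transporting it to $K$ through the affine map $F_K$, keeping track of the Jacobian factors via shape-regularity; both routes isolate exactly the same difficulty.
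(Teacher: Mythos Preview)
The paper does not prove this lemma; it is simply listed among the analytical tools. Your incenter--divergence-theorem argument is the standard derivation of the multiplicative trace inequality on a simplex, and the computation leading to
\[
\|\textbf{v}\|_{\partial K}^2\le \rho_K^{-1}(d+2h_K)\,\|\textbf{v}\|_K\,\|\textbf{v}\|_{1,K}
\]
is correct (up to a harmless factor of two: in the paper's notation $\rho_K=2r_K$ is the inscribed \emph{diameter}, so $(x-x_K)\cdot\textbf{n}_E=r_K=\rho_K/2$, not $\rho_K$).

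The genuine gap is your last step. The term $2h_K/\rho_K$ is indeed controlled by shape regularity, but $d/\rho_K$ is not: it blows up as the mesh is refined, and no quasi-uniformity assumption $c_1h\le h_K\le c_2\rho_K$ prevents this since $h\to 0$ as well. In fact the inequality as stated cannot hold with $\beta_1$ independent of $h_K$: for $\textbf{v}$ constant on $K$ one has $\|\textbf{v}\|_{\partial K}^2/(\|\textbf{v}\|_K\|\textbf{v}\|_{1,K})=|\partial K|/|K|\sim h_K^{-1}$. What your computation actually (and correctly) proves is the scaled trace inequality $\|\textbf{v}\|_{\partial K}^2\lesssim h_K^{-1}\|\textbf{v}\|_K^2+h_K|\textbf{v}|_{1,K}^2$, with the implicit constant depending only on $d$ and the shape-regularity parameter. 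That is the form in which such estimates are used in residual-based a~posteriori analysis, and you should state it that way rather than trying to absorb $d/\rho_K$ into a mesh-independent constant; the lemma as printed in the paper suffers from the same scaling defect.
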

\item \textbf{Cl\'ement interpolation operator:} In order to derive the upper error bounds, 
we introduce the  Cl\'ement interpolation operator 
$\mbox{I}_{\mbox{Cl}}^0: H_0^1(\Omega)\longrightarrow \mathcal{P}_c^b(\mathcal{T}_h)$ that approximates optimally non-smooth 
functions by continuous piecewise linear functions:
\begin{eqnarray*}
	\mathcal{P}_c^b(\mathcal{T}_h):=\left\{v\in C^0(\overline{\Omega}): \mbox{   }
	v_{|K} \in \mathbb{P}^1(K), \mbox{   } \forall K\in\mathcal{T}_h \mbox{  and  } 
	v=0 \mbox{  on  } \partial \Omega\right\}
\end{eqnarray*}
In addition, we will make use of a vector valued version 
of $\mbox{I}_{\mbox{Cl}}^0$, that is, $\textbf{I}_{\mbox{Cl}}^0: [H_0^1(\Omega)]^d\longrightarrow [\mathcal{P}_c^b(\mathcal{T}_h)]^d $, which 
is defined componentwise by $\mbox{I}_{\mbox{Cl}}^0.$ The following lemma establishes the local approximation properties of 
$\mbox{I}_{\mbox{Cl}}^0$ (and hence of $\textbf{I}_{\mbox{Cl}}^0$), for a proof see \cite[Section 3]{clement:75}.
\begin{lem}\label{lem1}
	There exist constants $C_1, C_2> 0$, independent of $h$, such that for all $v\in H_0^1(\Omega)$ there hold 
	\begin{eqnarray}
	\parallel v-\mbox{I}_{Cl}^0(v)\parallel_{K}&\leq& C_1h_K\parallel v\parallel_{1,\Delta(K)} 
	\mbox{   } \forall K\in \mathcal{T}_h,   \hspace*{0.2cm}\mbox{  and  }\\
	\parallel v-\mbox{I}_{Cl}^0(v)\parallel_{E}&\leq& C_2h_E^{1/2}\parallel v\parallel_{1,\Delta(E)}
	\mbox{   } \forall E\in\cE_h,
	\end{eqnarray}
	where $\Delta(K):=\cup\left\{K'\in \mathcal{T}_h: K'\cap K\neq\emptyset\right\}$ and  
	$\Delta (E):=\cup \left\{K'\in \mathcal{T}_h: K'\cap E\neq\emptyset\right\}$.
	\end{lem}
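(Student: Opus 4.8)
The plan is to follow the classical construction of \cite{clement:75}, adapting it to the homogeneous boundary condition encoded in $\mathcal{P}_c^b(\mathcal{T}_h)$. First I would fix the nodal (hat) basis $\{\varphi_z\}_{z\in\mathcal{N}_h}$ associated with the vertices $\mathcal{N}_h$ of $\mathcal{T}_h$, so that any $w\in\mathcal{P}_c^b(\mathcal{T}_h)$ equals $\sum_z w(z)\varphi_z$ with the sum effectively over interior vertices. For each interior vertex $z$ let $\omega_z:=\bigcup\{K'\in\mathcal{T}_h:z\in\overline{K'}\}$ be its star and define $\pi_z v\in\mathbb{P}^1(\omega_z)$ as the $L^2(\omega_z)$-orthogonal projection of $v$; for a boundary vertex set $\pi_z v:=0$. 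Then put $\mbox{I}_{Cl}^0(v):=\sum_z(\pi_z v)(z)\,\varphi_z$, which by construction lies in $\mathcal{P}_c^b(\mathcal{T}_h)$.

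For the element estimate, fix $K\in\mathcal{T}_h$ with vertices $z_1,\dots,z_{d+1}$. Since $\sum_i\varphi_{z_i}\equiv 1$ on $K$, I would write $v-\mbox{I}_{Cl}^0(v)=\sum_i\bigl(v-(\pi_{z_i}v)(z_i)\bigr)\varphi_{z_i}$ on $K$ and bound $\|v-\mbox{I}_{Cl}^0(v)\|_K\lesssim\sum_i\|v-\pi_{z_i}v\|_{\omega_{z_i}}$, using $\|\varphi_{z_i}\|_{L^\infty(K)}\le 1$, an inverse estimate to pass from the nodal value $(\pi_{z_i}v)(z_i)$ of the polynomial $\pi_{z_i}v$ to its $L^2(\omega_{z_i})$-norm on the scale $h_K$, and shape-regularity (which gives $h_{\omega_z}\sim h_K$ and bounds the number of stars meeting $K$). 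The Deny--Lions/Bramble--Hilbert lemma on each star gives $\|v-\pi_z v\|_{\omega_z}\lesssim h_{\omega_z}|v|_{1,\omega_z}$ for interior $z$; for a boundary vertex, $v\in H_0^1(\Omega)$ vanishes on $\partial\Omega\cap\partial\omega_z$, so a Poincar\'e--Friedrichs inequality on $\omega_z$ gives $\|v-\pi_z v\|_{\omega_z}=\|v\|_{\omega_z}\lesssim h_{\omega_z}|v|_{1,\omega_z}$ with constant depending only on shape-regularity. Collecting these over the finitely many stars meeting $K$, whose union is contained in $\Delta(K)$, yields $\|v-\mbox{I}_{Cl}^0(v)\|_K\le C_1 h_K\|v\|_{1,\Delta(K)}$.

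For the edge (face) estimate on $E\in\cE_h$ lying on $K$, I would apply the continuous trace inequality of Lemma \ref{lemd1}, rescaled to $K$ in the form $\|w\|_E^2\lesssim h_E^{-1}\|w\|_K^2+h_E|w|_{1,K}^2$, to $w=v-\mbox{I}_{Cl}^0(v)$. The first term is controlled by the element bound just proved. For the second term I would estimate $|\mbox{I}_{Cl}^0(v)|_{1,K}$ by an inverse inequality in terms of $h_K^{-1}\|\mbox{I}_{Cl}^0(v)-c\|_K$ with $c=(\pi_{z_1}v)(z_1)$, reduce once more to $\sum_i\|v-\pi_{z_i}v\|_{\omega_{z_i}}$ together with $|v|_{1,\Delta(E)}$, and conclude $|w|_{1,K}\lesssim\|v\|_{1,\Delta(E)}$; inserting both pieces gives $\|v-\mbox{I}_{Cl}^0(v)\|_E\le C_2 h_E^{1/2}\|v\|_{1,\Delta(E)}$.

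The main obstacle is the behaviour near $\partial\Omega$: on stars meeting the boundary the ordinary Deny--Lions estimate controls only the $H^1$-seminorm of $v-\pi_z v$, which is insufficient once one has forced $\pi_z v=0$ there, so the argument must genuinely use $v\in H_0^1(\Omega)$ via a Poincar\'e--Friedrichs inequality on boundary stars whose constant scales like $h_{\omega_z}$ and depends only on the minimum-angle condition, uniformly in $h$. Everything else is bookkeeping with the finite-overlap property of the stars and standard scaling, for which I would refer to \cite[Section~3]{clement:75}.
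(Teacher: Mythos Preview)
Your sketch is correct and follows the classical construction of Cl\'ement (with the standard modification for homogeneous Dirichlet data, sometimes attributed to Bernardi--Girault or Scott--Zhang): local $L^2$-projections on vertex stars, partition of unity via the hat functions, Bramble--Hilbert on interior stars, and a scaled Poincar\'e--Friedrichs inequality on boundary stars where the nodal value has been forced to zero. The edge bound via the scaled trace inequality plus an $H^1$-stability estimate for $\mbox{I}_{Cl}^0$ is also standard and correctly outlined.

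There is nothing to compare against, however: the paper does not prove this lemma but simply refers the reader to \cite[Section~3]{clement:75}. Your proposal therefore supplies more detail than the paper itself, and what you have written is exactly the argument one finds in that reference (together with its well-known adaptation to $H_0^1$). One minor comment: defining $\pi_z$ as the $L^2$-projection onto $\mathbb{P}^0(\omega_z)$ rather than $\mathbb{P}^1(\omega_z)$ makes the passage from the nodal value $(\pi_z v)(z)$ to $\|v-\pi_z v\|_{\omega_z}$ immediate (since $\pi_z v$ is then the constant mean value) and avoids the auxiliary inverse estimate you invoke; either choice works for the first-order estimates stated here.
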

\end{enumerate}

\subsection{Optimality of $\left\{\Theta_K\right\}_{K\in\cT_h}$}\label{mainresul}
\subsubsection{Reliability result}
\begin{thm} [\textbf{Reliability of $\Theta$}]\mbox{ }\\
	Let $\textbf{U}=(\textbf{u}_f,p,\textbf{u}_p,\phi)\in \textbf{H}$ be the exact solution of (\ref{Fequivalente}) and 
	$\textbf{U}_h=(\textbf{u}_{fh},p_h,\textbf{u}_{ph},\phi_h)\in\textbf{H}_h$ be the finite element solution of (\ref{discrete}). There exist a constant $C_{rel}> 0$ such that the following  estimate holds:
	\begin{equation}\label{reliability}
	\parallel\textbf{U}-\textbf{U}_h\parallel_h \leq C_{rel}\left(\Theta+\zeta\right).
	\end{equation}
\end{thm}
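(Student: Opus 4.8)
The plan is to establish reliability through the standard residual-type argument, adapted to the fully-mixed coupled setting and the extra stabilization term. First I would split the total error $\textbf{U}-\textbf{U}_h$ into a conforming and a nonconforming part: since $\textbf{U}_h$ need not belong to $\mathcal{H}$, I write $\textbf{U}_h=\textbf{W}_h+(\textbf{U}_h-\textbf{W}_h)$ for the best approximation $\textbf{W}_h\in\textbf{H}_h\cap\mathcal{H}$ furnished by Theorem \ref{tnonconferror}. The nonconforming contribution $\|\textbf{U}_h-\textbf{W}_h\|_h^2$ is then controlled by $\textbf{J}_{\Gamma}(\textbf{U}_h,\textbf{U}_h)$, and the latter is exactly the interface jump term $\sum_{E\in\cE_h(\partial K\cap\Gamma)}\frac{\delta h_E}{h}\|[(\textbf{u}_{fh}-\textbf{u}_{ph})\cdot\textbf{n}_f]_E\|_E^2$ appearing in $\Theta_{K,p}^2$, so this piece is immediately bounded by $\Theta$.

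Next I would handle the conforming part $\textbf{U}-\textbf{W}_h$. Using the inf-sup/coercivity structure of $\mathcal{L}_h$ on $\textbf{H}$ (the well-posedness noted after \eqref{Fequivalente}), there is a test function $\textbf{V}\in\textbf{H}$ with $\|\textbf{V}\|_h\lesssim\|\textbf{U}-\textbf{W}_h\|_h$ realizing (up to a constant) the norm of the error via $\mathcal{L}_h(\textbf{U}-\textbf{W}_h,\textbf{V})$. I then apply the Helmholtz decomposition of Theorem \ref{thelmholtz} to the relevant component of $\textbf{V}$, writing $\textbf{v}=\textbf{v}_0+\textbf{v}_1$ with $\textbf{v}_0\in H^1(\Omega)^d$ and $\textbf{v}_1=\curl\beta_p$ in $\Omega_p$, and use the estimate $\|\textbf{v}_0\|_{1,\Omega}+\|\beta_p\|_{1,\Omega_p}\lesssim\|\textbf{v}\|_{\mathcal{H}}$. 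The point of the decomposition is that the $\curl$-part is orthogonal to the Darcy divergence residual and only interacts with the $\curl(\rho g\textbf{K}^{-1}\textbf{u}_{ph}+\nabla\phi_h)$ terms and the tangential jumps, which is precisely why those terms enter $\Theta_{K,p}^2$. I would then insert a suitable discrete interpolant $\textbf{V}_h\in\textbf{H}_h$ of $\textbf{V}$ — the Clément operator $\textbf{I}_{Cl}^0$ on the $H^1$-parts (Lemma \ref{lem1}), together with the appropriate $BDM1$/$P1$ interpolants on the Darcy side — and invoke Galerkin orthogonality to replace $\textbf{V}$ by $\textbf{V}-\textbf{V}_h$, landing in the error representation \eqref{errorequation}–\eqref{errorequationd}.

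From there it is element-by-element Cauchy--Schwarz: on each $K$ the volume residuals pair with $\textbf{v}-\textbf{v}_h$ and $\psi-\psi_h$, and by Lemma \ref{lem1} contribute $h_K\|\cdot\|_K\,\|\textbf{V}\|_{1,\Delta(K)}$, i.e.\ the $h_K^2\|\cdot\|_K^2$ terms of $\Theta_{K,f}^2,\Theta_{K,p}^2$; the edge/interface residuals pair with traces of $\textbf{v}-\textbf{v}_h$ and contribute $h_E^{1/2}\|\cdot\|_E\,\|\textbf{V}\|_{1,\Delta(E)}$, i.e.\ the $h_E\|\cdot\|_E^2$ edge terms. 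Summing over $K$ and using the finite-overlap property of the patches $\Delta(K),\Delta(E)$ gives $\mathcal{L}_h(\textbf{U}-\textbf{W}_h,\textbf{V})\lesssim(\Theta+\zeta)\,\|\textbf{V}\|_h\lesssim(\Theta+\zeta)\,\|\textbf{U}-\textbf{W}_h\|_h$, where the data oscillation $\zeta$ arises exactly when replacing $\textbf{f}_f,f_p$ by their projections $\textbf{f}_{fh},f_h$ inside the volume residuals. Combining with the nonconforming bound and a triangle inequality yields \eqref{reliability}.

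The main obstacle I anticipate is the careful bookkeeping at the interface $\Gamma$: the coupling term $c_\Gamma$, the Beavers--Joseph--Saffman tangential terms, and the stabilization term all produce boundary contributions on $\Gamma$ that must be matched against the right components of $\Theta_{K,f}^2$ and $\Theta_{K,p}^2$, and one must check that the trace estimates (the continuous trace inequality, Lemma \ref{lemd1}, and the trace inequalities recalled after \eqref{Fequivalente}) together with the regularity $\textbf{u}_{|\Omega_p}\in[H^{1/2+\epsilon}(\Omega_p)]^d$ from Theorem \ref{treg} are strong enough to make the normal trace of $\textbf{u}_p$ on $\Gamma$ well defined and controllable. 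Ensuring the constants stay $h$-independent while the $\delta/h$ weight in $\textbf{J}_\Gamma$ interacts with the $h_E$-weighted estimator term is the delicate quantitative point.
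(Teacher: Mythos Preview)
Your proposal is correct and uses the same core ingredients as the paper --- the inf-sup stability of $\mathcal{L}_h$, the residual error representation \eqref{errorequation}--\eqref{errorequationd} (which already encodes the Helmholtz decomposition via the appearance of $\beta_p$), Cauchy--Schwarz, and the Cl\'ement interpolation estimates of Lemma~\ref{lem1} --- so the overall strategy matches.

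The one genuine difference is that you insert an explicit conforming/nonconforming splitting $\textbf{U}_h=\textbf{W}_h+(\textbf{U}_h-\textbf{W}_h)$ with $\textbf{W}_h\in\textbf{H}_h\cap\mathcal{H}$ and invoke Theorem~\ref{tnonconferror} to control the nonconforming piece by $\textbf{J}_{\Gamma}(\textbf{U}_h,\textbf{U}_h)$, thereby isolating the interface-jump contribution to $\Theta$. The paper's proof is terser: it applies the inf-sup bound directly to $\textbf{U}-\textbf{U}_h$, takes $q_h=\psi_h=0$ in the error equation, and passes straight to Cauchy--Schwarz plus Cl\'ement, without an explicit conforming correction. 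Your route is arguably cleaner in explaining \emph{why} the stabilization term $\sum_{E}\frac{\delta h_E}{h}\|[(\textbf{u}_{fh}-\textbf{u}_{ph})\cdot\textbf{n}_f]_E\|_E^2$ must appear in the estimator and how the $h^{-1}$ part of the $\|\cdot\|_h$-norm is absorbed; the paper's route is shorter but leaves that bookkeeping implicit. Both lead to \eqref{reliability}.
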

\begin{proof}
	We take $\psi_h=0=q_h$ in error equation (\ref{errorequation})-(\ref{errorequationd}) and we obtain:
	
	\begin{eqnarray} \label{errorequation1}
	\mathcal{L}_h(\textbf{U}-\textbf{U}_h,\textbf{V})&=&\mathcal{L}_h(\textbf{U}-\textbf{U}_h,\textbf{V}-\textbf{V}_h)\\\nonumber
	&=&\left[\displaystyle\sum_{K\in\cT_h^f}\left(\textbf{R}_K^f(\textbf{U}_h),\textbf{V}-\textbf{V}_h)\right)_{K}+\displaystyle\sum_{K\in\cT_h^p}\left(\textbf{R}_K^p(\textbf{U}_h),\textbf{V}-\textbf{V}_h)\right)_{K}\right],
	\end{eqnarray}
	where
	\begin{eqnarray}\nonumber\label{errorequations1}
	\left(\textbf{R}_K^f(\textbf{U}_h),\textbf{V}-\textbf{V}_h\right)_K&=&
	\left(\textbf{f}_{fh}+2\nu\nabla\cdot\mathbb{D}(\textbf{u}_{fh})-\nabla p_h,\textbf{v}_{f}-\textbf{v}_{fh}\right)_K-\left(q,\nabla\cdot\textbf{u}_{fh}\right)_K+
	(\textbf{f}_f-\textbf{f}_{fh},\textbf{v}_{f}-\textbf{v}_{fh})_K\\\nonumber
	&-&\left[\displaystyle\sum_{E\in\cE_h(\partial K\cap \Gamma)}\displaystyle
	\sum_{j=1}^{d-1}\left(2\nu\textbf{n}_f\cdot\mathbb{D}(\textbf{u}_{fh})\cdot\tau_j+
	\frac{\alpha}{\sqrt{\tau_j\cdot\textbf{K}\cdot\tau_j}}\textbf{u}_{fh}\cdot\tau_j,(\textbf{v}_{f}-\textbf{v}_{fh})\cdot\tau_j\right)_E\right]\\
	&+&\displaystyle\left[\sum_{E\in\cE_h(\partial K\cap \Gamma)}\left(
	p_h-2\nu\textbf{n}_f\cdot\mathbb{D}(\textbf{u}_{fh})\cdot\textbf{n}_{f}-\rho g \phi_h,(\textbf{v}_f-\textbf{v}_{fh})\cdot\textbf{n}_f\right)_E\right],
	\end{eqnarray}
	and 
	\begin{eqnarray}\nonumber
	\left(\textbf{R}_K^p(\textbf{U}_h),\textbf{V}-\textbf{V}_h\right)_K&=&
	\left(\curl (\rho g\textbf{K}^{-1}\textbf{u}_{ph}+\nabla\phi_h),\beta_p-\beta_{ph}\right)_K\\\nonumber
	&+&\left(\rho g(f_{ph}-\nabla\cdot\textbf{u}_{ph}),\psi\right)_K+\rho g(f_p-f_{ph},\psi)_K\\\nonumber
	&-&\displaystyle\sum_{E\in\cE_h(\partial K\cap \Omega_p)}
	\left(\rho g \textbf{K}^{-1}\textbf{u}_{ph}+\nabla\phi_h)\times \textbf{n}_E,\psi\right)_E\\
	\label{errorequationd1}
	&+&\displaystyle\sum_{E\in\cE_h(\partial K\cap \Omega_p)}\left([\rho g\phi_h\textbf{n}_E]_E,\beta_p-\beta_{ph}\right)_E\\\nonumber
	&-&\displaystyle\sum_{E\in\cE_h(\partial K\cap \Gamma)}\left([\rho g\phi_h\textbf{n}_E]_E,[\textbf{v}-\textbf{v}_h]_E\right)_E.
	\end{eqnarray}
	The inf-sup condition of $\mathcal{L}_h$ leads to:
	$$\parallel\textbf{U}-\textbf{U}_h\parallel\leq C \displaystyle\sup_{\textbf{V}\in \textbf{H}}\frac{|\mathcal{L}_h(\textbf{U}-\textbf{U}_h,\textbf{V})|}{\parallel\textbf{V}\parallel}.$$
	Now, using the error equation (\ref{errorequation1})-(\ref{errorequationd1}), Cauchy-Schwarz inequality and the Cl\'ement operator of lemma \ref{lem1} we deduce the estimate (\ref{reliability}). The proof is complete.
\end{proof}
\subsubsection{Efficiency result}
To prove local efficiency for $w\subset \Omega$, let us denote by
\begin{eqnarray*}
\parallel(\textbf{v}_f,\textbf{v}_p)\parallel_{h,w}^2&:=&\displaystyle\sum_{K\subset \bar{w}\cap \bar{\Omega}_f}|\textbf{v}_f|_{1,K}^2\\
&+&\displaystyle\sum_{K\subset \bar{w}\cap\bar{\Omega}_p}
\left(\parallel \textbf{v}_p\parallel_K^2+\parallel\dive_h\textbf{v}_p\parallel_K^2\right)\\
&+& \parallel \textbf{v}_f\times \textbf
n\parallel_{\Gamma\cap \bar{w}}^2+\displaystyle\sum_{K\subset \bar{\Omega}}\textbf{J}_K(\textbf{v}_f,\textbf{v}_f),
\end{eqnarray*}
where
$$
\textbf{J}_K(\textbf{v}_f,\textbf{v}_f):=\displaystyle\sum_{E\in\cE_h(\bar{\Omega}_f)\cap \cE_h(K)}\delta h_E^{-1}\parallel [\textbf{v}_f]_E\parallel_E^2;
$$
and 
\begin{eqnarray*}
	\parallel (\epsilon,\lambda)\parallel_w:=\parallel\epsilon\parallel_w+\parallel\lambda\parallel_w.
\end{eqnarray*} 
The main result of this subsection can be stated as follows
\begin{thm}[\textbf{Efficiency of $\Theta$}]
	Under the assumptions of Theorem \ref{treg}, the following lower error bound holds:
	\begin{eqnarray*}
		\Theta_K&\lesssim& \parallel (\textbf{e}_f,\textbf{e}_p)\parallel_{h,\tilde{w}_K}+
		\parallel (\epsilon_p,\lambda_{\phi})\parallel_{\tilde{w}_K}+\displaystyle\sum_{K'\subset \tilde{w}_K} \zeta_{K'},
	\end{eqnarray*}
where $\tilde{w}_K$ is a finite union of neighboring elements of $K$.
\end{thm}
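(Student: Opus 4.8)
The plan is to prove the local efficiency bound term by term, following the bubble-function localization technique of Verfürth and Carstensen cited in the paper. The residual estimator $\Theta_K^2 = \Theta_{K,f}^2 + \Theta_{K,p}^2$ is a sum of element-interior residuals (weighted by $h_K^2$), interface/edge residuals (weighted by $h_E$), and jump terms; each of these must be dominated by the local norm of the true error $\parallel(\textbf{e}_f,\textbf{e}_p)\parallel_{h,\tilde w_K}$, the pressure/head error $\parallel(\epsilon_p,\lambda_\phi)\parallel_{\tilde w_K}$, and the data oscillation $\sum_{K'\subset\tilde w_K}\zeta_{K'}$. First I would handle the Stokes interior term $h_K\parallel \textbf{f}_{fh}+2\nu\nabla\cdot\mathbb{D}(\textbf{u}_{fh})-\nabla p_h\parallel_K$: test the error equation \eqref{errorequation} with $\textbf{v}_f-\textbf{v}_{fh} = b_K\, \textbf{R}_K^f(\textbf{U}_h)$ (the simplex-bubble times the interior residual polynomial), use the bubble estimates \eqref{cl1}--\eqref{cl2}, and absorb $\nabla\cdot\mathbb{D}(\textbf{e}_f)$, $\nabla\epsilon_p$ and the oscillation $\textbf{f}_f-\textbf{f}_{fh}$ on the right-hand side after an inverse inequality and the scaling $|b_K \textbf{R}|_{1,K}\lesssim h_K^{-1}\parallel\textbf{R}\parallel_K$. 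The divergence term $\parallel\nabla\cdot\textbf{u}_{fh}\parallel_K = \parallel\nabla\cdot\textbf{e}_f\parallel_K$ is immediate since $\nabla\cdot\textbf{u}_f=0$, and $\parallel\rho g(f_{ph}-\nabla\cdot\textbf{u}_{ph})\parallel_K$ is treated the same way against $\psi-\psi_h = b_K(\cdot)$, bounding $\nabla\cdot\textbf{e}_p$ plus $\rho g(f_p-f_{ph})$.

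Next I would treat the edge and interface terms. For the interior Darcy jumps $\parallel[\rho g(\textbf{K}^{-1}\textbf{u}_{ph}+\nabla\phi_h)\times\textbf{n}_p]_E\parallel_E$ and the element-curl term $h_K\parallel\curl(\rho g\textbf{K}^{-1}\textbf{u}_{ph}+\nabla\phi_h)\parallel_K$, I use the face-bubble $b_E$, extend the jump via the operator $\textbf{L}$ into the patch $\omega_E$, test \eqref{errorequationd} with $\beta_p-\beta_{ph}$ supported there, apply \eqref{cl3}--\eqref{cl4}, and recognize that $\curl(\rho g\textbf{K}^{-1}\textbf{u}_p+\nabla\phi)=0$ pointwise (from $\textbf{u}_p=-\textbf{K}\nabla\phi$, up to the conductivity regularity of Theorem \ref{treg} which controls commutator terms), so the residual is dominated by $\parallel\textbf{e}_p\parallel$ and the tangential trace $\parallel\textbf{v}_f\times\textbf{n}\parallel$ hidden in $\parallel(\cdot,\cdot)\parallel_{h,\tilde w_K}$. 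The interface terms in $\Theta_{K,f}^2$ — the Beavers--Joseph--Saffman tangential residual and the normal-stress/head balance residual — are each the residual of the exact interface conditions \eqref{cd2}--\eqref{cd3}, so after localizing with $b_E$ on $\Gamma$-edges and testing with suitable lifted functions, trace inequalities (Lemma \ref{lemd1}) plus the already-controlled bulk terms finish them. The stabilization jump $\frac{\delta h_E}{h}\parallel[(\textbf{u}_{fh}-\textbf{u}_{ph})\cdot\textbf{n}_f]_E\parallel_E^2$ is bounded directly using $\textbf{J}_\Gamma(\textbf{U}_h,\textbf{U}_h)$ via Theorem \ref{tnonconferror} together with the fact that the exact solution lies in $\mathcal{H}$, so $(\textbf{u}_f-\textbf{u}_p)\cdot\textbf{n}_f$ has no jump.

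Finally I would collect all the local bounds: summing the contributions over the (fixed, shape-regularity-bounded) number of elements meeting $K$ or the relevant edges gives the patch $\tilde w_K$, and the constants depend only on the minimum-angle condition, on $\nu$, $\rho g$, $k_{\min}$, $k_{\max}$, $\alpha$ and the Lipschitz constant of $\textbf{K}$. The main obstacle I expect is the Darcy side: unlike the standard Stokes residual analysis, here $\textbf{u}_{ph}$ lives in $BDM_1$ while $\phi_h$ is $P_1$-Lagrange, so $\rho g\textbf{K}^{-1}\textbf{u}_{ph}+\nabla\phi_h$ is not curl-free at the discrete level, and its curl and tangential jumps must be estimated against $\textbf{e}_p$ and $\lambda_\phi$ through the Helmholtz decomposition (Theorem \ref{thelmholtz}) — this forces the appearance of the $\curl\beta_p$ component and of the tangential-trace term $\parallel\textbf{v}_f\times\textbf{n}\parallel_{\Gamma\cap\bar w}$ in the local norm, and one must carefully check that the conductivity-regularity hypothesis of Theorem \ref{treg} is exactly what is needed to control the non-polynomial coefficient $\textbf{K}^{-1}$ when applying the polynomial bubble estimates. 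A secondary technical point is handling the mismatch between $\textbf{f}_f$ and its piecewise average $\textbf{f}_{fh}$ (and $f_p$ versus its $\mathbb{P}^1$ projection $f_{ph}$), which is precisely why the oscillation term $\sum_{K'\subset\tilde w_K}\zeta_{K'}$ must be carried along in the statement.
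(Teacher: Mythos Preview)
Your proposal is correct and follows essentially the same route as the paper: term-by-term localization via the Verf\"urth bubble technique, with the element bubble $b_K$ for the interior residuals, the face bubble $b_E$ for the edge/interface terms, direct treatment of the divergence residuals via $\nabla\cdot\textbf{u}_f=0$ and $\nabla\cdot\textbf{u}_p=f_p$, and the observation that the exact solution satisfies $(\textbf{u}_f-\textbf{u}_p)\cdot\textbf{n}_f=0$ on $\Gamma$ so the stabilization term is already part of the error norm. Two small remarks: the invocation of Theorem~\ref{tnonconferror} for the stabilization jump is unnecessary (that theorem bounds the nonconformity error \emph{above} by $\textbf{J}_\Gamma$, not below), and the paper in fact handles the Darcy divergence residual $\parallel\rho g(f_{ph}-\nabla\cdot\textbf{u}_{ph})\parallel_K$ and the tangential jump $\parallel[(\textbf{K}^{-1}\textbf{u}_{ph}+\nabla\phi_h)\times\textbf{n}_p]_E\parallel_E$ by direct subtraction of the exact equation rather than by a bubble argument --- your bubble approach also works and is arguably cleaner, and your care about the non-polynomial coefficient $\textbf{K}^{-1}$ is a point the paper passes over silently.
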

\begin{proof}
We begin by bounding each term of the residuals separately.
\\
$\bullet$ Element residual in $\Omega_f$: To estimate $h_K^2\parallel \textbf{f}_{fh}+2\nu\nabla\cdot\mathbb{D}(\textbf{u}_{fh})-\nabla p_h\parallel_K^2$, we choose in error equation (\ref{errorequation1})-(\ref{errorequationd1}) for each $K\in\cT_h^f$, $\textbf{V}=(\textbf{v}_s^K,0,\textbf{v}_p^K,0)$ and $\textbf{V}_h=(0,0,0,0)$ with 
$\textbf{v}_p^K=0$ on $\Omega_p$, 
$$\textbf{v}_f^K=
\left\{
\begin{array}{cccccccccc}
\left[\textbf{f}_{fh}+2\nu\nabla\cdot \mathbb{D}(\textbf{u}_{fh})-\nabla p_h\right]b_K \mbox{  on  } K\in\cT_h^f\\
0 \mbox{  on  } \Omega_f\smallsetminus K
\end{array}
\right.
$$ for obtained,
$
	\mathcal{L}_h(\textbf{U}-\textbf{U}_h,\textbf{V})=
	\parallel\left[\textbf{f}_{fh}+2\nu\nabla\cdot \mathbb{D}(\textbf{u}_{fh})-\nabla p_h\right]b_K^{1/2} \parallel_K^2+(\textbf{f}_f-\textbf{f}_{fh},\textbf{v}_f^K)_K.
$
Noted that:\\ $\left[\textbf{f}_{fh}+2\nu\nabla\cdot \mathbb{D}(\textbf{u}_{fh})-\nabla p_h\right]b_K\in \mathbb{P}^k(K)$ and 
vanish on $\partial K$.
Because, $\mathcal{L}_h(\textbf{U}-\textbf{U}_h,\textbf{V})=(\mathbb{D}(\textbf{e}_f), \textbf{v}_f^K)_K$ in this case, then we have:
 $
 (\mathbb{D}(\textbf{e}_f), \textbf{v}_f^K)_K=
 	\parallel\left[\textbf{f}_{fh}+2\nu\nabla\cdot \mathbb{D}(\textbf{u}_{fh})-\nabla p_h\right]b_K^{1/2} \parallel_K^2+(\textbf{f}_f-\textbf{f}_{fh},\textbf{v}_f^K)_K.
 $
The first inverse inequality (\ref{cl1}) and the Cauchy-Schwarz inequality lead to
\begin{eqnarray*}
\parallel\textbf{f}_{fh}+2\nu\nabla\cdot \mathbb{D}(\textbf{u}_{fh})-\nabla p_h
\parallel_K^2&\sim& 
	\parallel\left[\textbf{f}_{fh}+2\nu\nabla\cdot \mathbb{D}(\textbf{u}_{fh})-\nabla p_h\right]b_K^{1/2} \parallel_K^2\\
	&=&\int_K\mathbb{D}(\textbf{e}_f)\left[\textbf{f}_{fh}+2\nu\nabla\cdot \mathbb{D}(\textbf{u}_{fh})-\nabla p_h\right]b_K-(\textbf{f}_f-\textbf{f}_{fh},\textbf{v}_f^K)_K\\
	&\leq& |\textbf{e}_f|_K|(\textbf{f}_{fh}+2\nu\nabla\cdot \mathbb{D}(\textbf{u}_{fh})-\nabla p_h)b_K|_{1,K}+\parallel\textbf{f}_f-\textbf{f}_{fh}\parallel_K\parallel\textbf{v}_f^K\parallel_K
\end{eqnarray*}
Using inverse inequality (\ref{cl2}) we deduce the estimate:
\begin{eqnarray}\label{est1}
	h_K\parallel\textbf{f}_{fh}+2\nu\nabla\cdot \mathbb{D}(\textbf{u}_{fh})-\nabla p_h \parallel_K&\lesssim& 
	\parallel(\textbf{e}_f,\textbf{e}_p)\parallel_{h,\tilde{w}_K}+\zeta_K.
\end{eqnarray}
$\bullet$ Divergence element residual in $\Omega_f$ (Estimation of $\parallel\nabla\cdot\textbf{u}_{fh}\parallel_K^2$):  For each $K\in\cT_h^f$, we have,
\begin{eqnarray}\nonumber\label{est2}
\parallel \nabla\textbf{u}_{fh}\parallel_K&=&
\parallel\nabla\left( \textbf{u}_f-\textbf{u}_{fh}\right)\parallel_K\\
&\lesssim&|\textbf{u}_f-\textbf{u}_{fh}|_{1,K}.
\end{eqnarray}
$\bullet$ Element residual in $\Omega_p$: We have for each $K\in\cT_h^p$,
\begin{eqnarray}\nonumber
	\parallel\rho g(f_{ph}-\nabla\cdot \textbf{u}_{ph})\parallel_K&=&\parallel	\rho g(f_{ph}-\nabla\cdot \textbf{u}_{ph})-\rho g(f_p-\nabla\cdot \textbf{u}_p)\parallel_K\\\nonumber
	&=&\parallel\rho g \dive_h(\textbf{u}_p-\textbf{u}_{ph})-\rho g(f_p-f_{ph})\parallel_K
	\\
	&\lesssim& \parallel (\textbf{e}_f,\textbf{e}_p)\parallel_{h,\tilde{w}_K}+\zeta_K.
\end{eqnarray}
$\bullet$ Curl element residual in $\Omega_p$: For $K\in \cT_h^p$, we set $C_K=\curl (\rho g\textbf{K}^{-1}\textbf{u}_{ph}+\nabla\phi_h)$ and \\$\textbf{W}_K=C_Kb_K$. Hence we notice that $\curl \textbf{W}_K$ belongs to $\textbf{H}$ and is divergence free, therefore by equation (\ref{errorequation1})-(\ref{errorequationd1}), we obtain with $\textbf{V}_h=\textbf{0}$ and $\beta_p=\textbf{W}_K$,
$\mathcal{L}_h(\textbf{U}-\textbf{U}_h,\textbf{W}_K)=\left(\textbf{R}_K^p(\textbf{U}_h),\textbf{W}_K\right)_K=\parallel \curl (\rho g \textbf{K}^{-1}\textbf{u}_{ph}+\nabla \phi_h)b_K^{1/2}\parallel_K^2$.
The first inverse inequality (\ref{cl1}) and the Cauchy-Schwarz inequality lead to
\begin{eqnarray*}
\parallel \curl (\rho g \textbf{K}^{-1}\textbf{u}_{ph}+\nabla \phi_h)\parallel_K^2 &\sim&	\parallel \curl (\rho g \textbf{K}^{-1}\textbf{u}_{ph}+\nabla \phi_h)b_K^{1/2}\parallel_K^2\\
&=&\int_K\left[\rho g\textbf{K}^{-1}(\textbf{u}_p-\textbf{u}_{ph})+\nabla (\phi-\phi_h)\right]\cdot\curl \textbf{W}_K\\
&\leq& \left(\parallel\rho g\textbf{K}^{-1}(\textbf{u}_p-\textbf{u}_{ph})\parallel_K+\parallel\nabla (\phi-\phi_h)\parallel_K\right)\cdot\parallel\curl \textbf{W}_K\parallel_K
\\
&\lesssim& \left(\parallel\rho g\textbf{K}^{-1}(\textbf{e}_p) \parallel_K+\parallel\nabla (\lambda_{\phi})\parallel_K\right)\cdot
\parallel\textbf{W}_K\parallel_K.
\end{eqnarray*}
Again the inverse inequality (\ref{cl1}) allows to get:
\begin{eqnarray}\label{est3}
	\parallel \curl (\rho g \textbf{K}^{-1}\textbf{u}_{ph}+\nabla \phi_h)\parallel_K
	&\lesssim&
	\parallel (\textbf{e}_f,\textbf{e}_p)\parallel_{h,\tilde{w}_K}+\parallel(\epsilon_p,\lambda_{\phi})\parallel_{\tilde{w}_K}
\end{eqnarray}
$\bullet$ Interface elements on $\Gamma$: We fix an  edge $E$ included in $\Gamma$ and for a constant $\textbf{r}_E$ fixed later on an a unit vector $\textbf{N}$, we consider
$$\textbf{W}_E=\textbf{r}_Eb_E\textbf{N},$$
that clearly belongs to $\textbf{H}$. Hence by residual equation (\ref{errorequation1})-(\ref{errorequationd1}) we obtain with $\textbf{V}_h=\textbf{0}$,
\begin{eqnarray}\label{id1}
\mathcal{L}_h(\textbf{U}-\textbf{U}_h,\textbf{W}_E)=\left(\textbf{R}_{K_f}^f(\textbf{U}_h),\textbf{W}_E\right)_{K_f}+\left(\textbf{R}_{K_p}^p(\textbf{U}_h),\textbf{W}_E\right)_{K_p},
\end{eqnarray}
where $K_f$ (resp. $K_p$) is the unique triangle/tetrahedron included in $\bar{\Omega}_f$ (resp. $\bar{\Omega}_p$) having $E$ as edge/face, and 
\begin{eqnarray}\label{id2}
\left(\textbf{R}_{K_f}^f(\textbf{U}_h),\textbf{W}_E\right)_{K_f}&=&
\left(\textbf{f}_{f}+2\nu\nabla\cdot\mathbb{D}(\textbf{u}_{fh})-\nabla p_h,\textbf{W}_E\right)_{K_f}-\left(q,\nabla\cdot\textbf{u}_{fh}\right)_{K_f}\\\nonumber
&-&\left[\displaystyle\sum_{E\in\cE_h(\partial K_f\cap \Gamma)}\displaystyle
\sum_{j=1}^{d-1}\left(2\nu\textbf{n}_f\cdot\mathbb{D}(\textbf{u}_{fh})\cdot\tau_j+\frac{\alpha}{\sqrt{\tau_j\cdot\textbf{K}\cdot\tau_j}}\textbf{u}_{fh}\cdot\tau_j,\textbf{W}_{E}\cdot\tau_j\right)_E\right]\\\nonumber
&+&\displaystyle\left[\sum_{E\in\cE_h(\partial K_f\cap \Gamma)}\left(
p_h-2\nu\textbf{n}_f\cdot\mathbb{D}(\textbf{u}_{fh})\cdot\textbf{n}_{f}-\rho g \phi_h,\textbf{W}_E)\cdot\textbf{n}_f\right)_E\right],
\end{eqnarray}
and 
\begin{eqnarray}\nonumber\label{id3}
\left(\textbf{R}_K^p(\textbf{U}_h),\textbf{W}_E\right)_{K_p}&=&
\left(\curl (\rho g\textbf{K}^{-1}\textbf{u}_{ph}+\nabla\phi_h),\beta_p\right)_{K_p}\\\nonumber
&+&\left(\rho g(f_p-\nabla\cdot\textbf{u}_{ph}),\psi\right)_{K_p}\\\nonumber
&-&\displaystyle\sum_{E\in\cE_h(\partial K_p\cap \Omega_p)}
\left(\rho g \textbf{K}^{-1}\textbf{u}_{ph}+\nabla\phi_h)\times \textbf{n}_E,\psi\right)_E\\
&+&\displaystyle\sum_{E\in\cE_h(\partial K\cap \Omega_p)}\left([\rho g\phi_h\textbf{n}_E]_E,\beta_p\right)_E\\\nonumber
&-&\displaystyle\sum_{E\in\cE_h(\partial K\cap \Gamma)}\left([\rho g\phi_h\textbf{n}_E]_E,[\textbf{W}]_E\right)_E.
\end{eqnarray}
$\star$ Taken $\textbf{W}_E =\textbf{0} $ in $K_p$, $q=0$ in $\Omega$ and for each
 $j=1,\ldots,d-1$, $\textbf{r}_E=
2\nu\textbf{n}_f\cdot\mathbb{D}(\textbf{u}_{fh})\cdot \tau_j+\frac{\alpha}{\sqrt{\tau_j\cdot\textbf{K}\cdot\tau_j}} \textbf{u}_{fh}\cdot\tau_j$ with $\textbf{N}=\tau_j$. We have 
\begin{eqnarray*}
	\mathcal{L}_h(\textbf{U}-\textbf{U}_h,\textbf{W}_E)&=&
	\left(\textbf{f}_{f}+2\nu\nabla\cdot\mathbb{D}(\textbf{u}_{fh})-\nabla p_h,\textbf{W}_E\right)_{K_f}\\\nonumber
	&-&\left(2\nu\textbf{n}_f\cdot\mathbb{D}(\textbf{u}_{fh})\cdot\tau_j+\frac{\alpha}{\sqrt{\tau_j\cdot\textbf{K}\cdot\tau_j}}\textbf{u}_{fh}\cdot\tau_j,\textbf{W}_{E}\cdot\tau_j\right)_E
	\\
	&=&\left(\textbf{f}_{f}+2\nu\nabla\cdot\mathbb{D}(\textbf{u}_{fh})-\nabla p_h,\textbf{W}_E\right)_{K_f}-
	\parallel\textbf{r}_Eb_E^{1/2}\parallel_E^2
	\end{eqnarray*}
Hence, 
\begin{eqnarray*}
\parallel\textbf{r}_E\parallel_E^2\sim \parallel\textbf{r}_Eb_E^{1/2}\parallel_E^2&=&
\left(\textbf{f}_{f}+2\nu\nabla\cdot\mathbb{D}(\textbf{u}_{fh})-\nabla p_h,\textbf{W}_E\right)_{K_f}-
\mathcal{L}_h(\textbf{U}-\textbf{U}_h,\textbf{W}_E)\\
&=&
\left(\textbf{f}_{fh}+2\nu\nabla\cdot\mathbb{D}(\textbf{u}_{fh})-\nabla p_h,\textbf{W}_E\right)_{K_f}
-
\int_{K_f}2\nu \mathbb{D}(\textbf{e}_f):\mathbb{D}(\textbf{W}_E)\\
&+&\int_{K_f}\epsilon_p\dive \textbf{W}_E+(\textbf{f}_f-\textbf{f}_{fh},\textbf{W}_E)_{K_f}-\displaystyle\sum_{j=1}^{d-1}\frac{\alpha}{\sqrt{\tau_j\cdot\textbf{K}\cdot\tau_j}}
(\textbf{e}_f\cdot\tau_j,\textbf{W}_E\cdot\tau_j)_E.
\end{eqnarray*}
Inverse inequalities (\ref{cl2})-(\ref{cl3}) and Cauchy-Schwarz inequality lead to:
\begin{eqnarray}\label{est4}
	h_E^{\frac{1}{2}}
	\parallel 2\nu\textbf{n}_f\cdot\mathbb{D}(\textbf{u}_{fh})\cdot\tau_j+\frac{\alpha}{\sqrt{\tau_j\cdot\textbf{K}\cdot\tau_j}}\textbf{u}_{fh}\cdot\tau_j\parallel_E\lesssim
	\parallel(\textbf{e}_f,\textbf{e}_p)\parallel_{h,w_E}+\parallel (\epsilon_p,\lambda_{\phi})\parallel_{w_E}+\displaystyle\sum_{K\subset w_E}\zeta_K,
\end{eqnarray}
with $w_E=K_f\cup K_p$.\\\\
$\star$Taken $\textbf{W}_E =\textbf{0} $ in $K_f$, $q=0$ in $\Omega$ and for each
$j=1,\ldots,d-1$, $\textbf{r}_E=
2\nu\textbf{n}_f\cdot\mathbb{D}(\textbf{u}_{fh})\cdot \tau_j+\frac{\alpha}{\sqrt{\tau_j\cdot\textbf{K}\cdot\tau_j}} \textbf{u}_{fh}\cdot\tau_j$ with $\textbf{N}=\textbf{n}_f$. As before the identities (\ref{id1})-(\ref{id3}) and the inverse inequalities (\ref{cl2}) and (\ref{cl3}) lead to
\begin{eqnarray}\label{est5}
	h_E^{\frac{1}{2}}\parallel p_h-2\nu\textbf{n}_f\cdot\mathbb{D}(\textbf{u}_{fh})\cdot
	\textbf{n}_f-\rho g\phi_h\parallel_E\lesssim \parallel(\textbf{e}_f,\textbf{e}_p)\parallel_{h,w_E}+\parallel (\epsilon_p,\lambda_{\phi})\parallel_{w_E}+\displaystyle\sum_{K\subset w_E}\zeta_K.
\end{eqnarray}
$\bullet$ Piezometric head jump in $\Omega_p$: For each edge/face $E\in\cE_h(\Omega_p)$, we consider $w_E=T_1\cup T_2$. As $[\rho g \phi_h \textbf{n}_p]_E\in[\mathbb{P}^{1}(E)]^d$ we set 
$$\textbf{W}_E:=[\rho g \phi_h \textbf{n}_p]_Eb_E\in [H_0^1(w_E)]^d.$$
Using the residual equation (\ref{errorequation1})-(\ref{errorequationd1}) we obtain with $\textbf{V}_h=\textbf{0}$ and $\textbf{V}=(0,0,\beta_p,0)$ where $\beta_p=\textbf{W}_E$:
\begin{eqnarray*}
	\mathcal{L}_h(\textbf{U}-\textbf{U}_h,\textbf{V})=([\rho g\phi_h\textbf{n}_E]_E,\textbf{W}_E)_E.
\end{eqnarray*}
Therefore, Cauchy-Schwarz inequality and inverse inequalities (\ref{cl3})-(\ref{cl4}) lead to:
\begin{eqnarray*}
\parallel[\rho g\phi_h\textbf{n}_E]_E\parallel_E^2\sim\parallel[\rho g\phi_h\textbf{n}_E]_Eb_E^{\frac{1}{2}}\parallel_E^2&=&\int_E[\rho g\phi_h\textbf{n}_E]_E\cdot\textbf{W}_E=\displaystyle\sum_{i=1}^2\rho g(\phi-\phi_h,\nabla\cdot \textbf{W}_E)_{K_i}\\
&\leq& \displaystyle\sum_{i=1}^2\parallel\rho g \lambda_{\phi}\parallel_{K_i}\parallel\nabla\cdot\textbf{W}_E\parallel_{K_i}\\
&\lesssim& \displaystyle\sum_{i=1}^2\parallel\lambda_{\phi}\parallel_{K_i}h_E^{-1}\parallel[\rho g\phi_h\textbf{n}_E]_E\parallel_E.
\end{eqnarray*}
In additionally, since by regularity Theorem \ref{treg} the jump of $\textbf{u}$ is zero through all the edges of $\Omega$, hence we clearly have $\textbf{J}_{\Gamma}(\textbf{U}-\textbf{U}_h,\textbf{U}-\textbf{U}_h)=\textbf{J}_{\Gamma}(\textbf{U}_h,\textbf{U}_h)$. Thus,
\begin{eqnarray}\label{est6}
h_E^{1/2}\parallel[\rho g \phi_h\textbf{n}_p]_E\parallel_E+\frac{\delta h_E^{1/2}}{h}
\parallel[(\textbf{u}_{fh}-\textbf{u}_{ph})\cdot\textbf{n}_f]_E\parallel_E\lesssim
\parallel(\textbf{e}_f,\textbf{e}_p)\parallel_{h,w_E}+\parallel (e_p,\lambda_{\phi})\parallel_{w_E}.
\end{eqnarray}
$\bullet$ Finally, for $E\in\cE_h(\partial K\cap \Omega_p)$, we have 
\begin{eqnarray*}
	\textbf{K}^{-1}\textbf{u}_{ph}+\nabla \phi_h&=&\textbf{K}^{-1}\textbf{u}_{ph}+\nabla\phi_h-\textbf{K}^{-1} \textbf{u}_p-\nabla \phi\\
	&=&-\left[\textbf{K}^{-1}(\textbf{u}_p-\textbf{u}_{ph})+\nabla(\phi-\phi_h)\right]
\end{eqnarray*}
Thus,
\begin{eqnarray}\label{est7}
	\parallel[(\textbf{K}^{-1}\textbf{u}_{ph}+\nabla \phi_h)\times \textbf{n}_p]_E\parallel_E &\lesssim& 
	\parallel(\textbf{e}_f,\textbf{e}_p)\parallel_{h,K}+\parallel(\epsilon_p,\lambda_{\phi})\parallel_K
\end{eqnarray}
The estimates (\ref{est1}), (\ref{est2}), (\ref{est3}), (\ref{est4}), (\ref{est5}), (\ref{est6}) and (\ref{est7})  provide the desired local lower error bound.
\end{proof}
\section{Summary}
In this paper we have discussed a posteriori error estimates for a finite element approximation of the Stokes-Darcy system. A residual type a posteriori error estimator is provided, that is both reliable and efficient. Many issues remain to be addressed in this area,let us mention other types of  a posteriori error estimators or implementation and convergence analysis of adaptive finite element methods.
\section{Nomenclatures}
\begin{itemize}
	\item $\Omega\subset \mathbb{R}^d, d\in\{2,3\}$ bounded domain 
	\item $\Omega_p:$ the porous medium domain
	\item $\Omega_f=\Omega\smallsetminus \overline{\Omega}_d$
	\item $\Gamma=\partial \Omega_f\cap\partial \Omega_p$
	\item $\Gamma_l=\partial \Omega_l\smallsetminus\Gamma,$ $l=f,p$
	\item $\textbf{n}_f$ (resp. $\textbf{n}_p$) the unit outward normal vector along $\partial \Omega_f$ (resp. $\partial \Omega_p$)
	\item $\textbf{u}$: the fluid velocity
	\item $p$: the fluid pressure
	\item 
	In $2D$, the $\curl$ of a scalar function $w$ is given as usual by 
	$$\curl w:=\left(\frac{\partial w}{\partial x_2},-\frac{\partial w}{\partial x_1}\right)^{\top}$$
	\item 
	In $3D$, the $\curl $ of a vector function $\textbf{w}=(w_1,w_2,w_3)$
	is given as usual by $\curl \textbf{w}:=\nabla \times \textbf{w}$ namely,
	\begin{eqnarray*}
		\curl \textbf{w}&:=& 
		\left(\frac{\partial w_3}{\partial x_2}-\frac{\partial w_2}{\partial x_3},
		\frac{\partial w_1}{\partial x_3}-\frac{\partial w_3}{\partial x_1},
		\frac{\partial w_2}{\partial x_1}-\frac{\partial w_1}{\partial x_2}\right)
	\end{eqnarray*}
	\item $\mathbb{P}^k$: the space of polynomials of total degree not larger than $k$
	\item $\cT_h$: triangulation of $\Omega$
	\item $\cT_h^l$: the corresponding induced triangulation of $\Omega_l$, $l\in\{f,p\}$
	\item For any $K\in\cT_h$, $h_K$ is the diameter of $K$ and $\rho_K=2r_K$ is the diameter of the largest ball inscribed into $K$
	\item $h:=\displaystyle\max_{K\in\cT_h} h_K$  and   $\sigma_h:=\displaystyle\max_{K\in\cT_h} \frac{h_K}{\rho_K}$
	\item $\cE_h$: the set of all the edges or faces of the triangulation
	\item $\cE(K)$: the set of all the edges ($N=2$) or faces ($N=3$) of a element $K$
	\item $\cE_h:=\displaystyle\bigcup_{K\in\cT_h} \cE(K)$
	\item $\mathcal{N}(K)$: the set of all the vertices of a element $K$
	\item $\mathcal{N}_h:=\displaystyle\bigcup_{K\in\cT_h} \mathcal{N}(K)$
	\item For $\mathcal{A}\subset \overline{\Omega}$, $\mathcal{E}_h(\mathcal{A}):=\{E\in\mathcal{E}_h: E\subset \mathcal{A}\}$
	\item For $E\in\cE_h$, we associate a unit vector $\textbf{n}_E$ such that $\textbf{n}_E$ is orthogonal to $E$ and equals to the unit exterior normal vector to $\partial\Omega$
	\item For $E\in\cE_h$, $[\phi]_E$ is the jump across $E$ in the direction of $\textbf{n}_E$
	\item In order to avoid excessive use of constants, the abbreviations $x\lesssim y$ and $x\sim y$ 
	stand for $x\leqslant cy$ and $c_1x\leqslant y \leqslant c_2x$, respectively, with positive constants independent of $x$, $y$ or $\cT_h$.
\end{itemize}

\newcommand{\noopsort}[1]{}

\end{normalsize}

\begin{thebibliography}{10}
	
	\bibitem{20}
	M.~Ainsworth and J.~Oden.
	\newblock A posteriori error estimators for {S}tokes and {O}ssen's equations.
	\newblock {\em SIAM J. Numer. Anal.}, 17:228--246, 1997.
	
	\bibitem{21}
	T.~Arbogast and D.~Brunson.
	\newblock A computational method for approximating a {D}arcy-{S}tokes system
	governing a vuggy porous medium.
	\newblock {\em Computational Geosciences}, 11:207--218, 2007.
	
	\bibitem{GL:2018}
	M.~G. Armentano and M.~L. Stockdale.
	\newblock A unified mixed finite element approximations of the {S}tokes-{D}arcy
	coupled problem.
	\newblock {\em Computers and Mathematics with Applications},
	https://doi.org/10.1016/j.camwa.2018.12.032, 2018.
	
	\bibitem{ABF:1984}
	D.~N. Arnold, F.~Brezzi, and M.~Fortin.
	\newblock A stable finite element for the {S}tokes equations.
	\newblock {\em Calcolo}, 21:337--344, 1984.
	
	\bibitem{49}
	I.~Babu\v{s}ka and G.~Gatica.
	\newblock A residual-based a posteriori error estimator for the
	{S}tokes-{D}arcy coupled problem.
	\newblock {\em SIAM J. Numer. Anal.}, 48:498--523, 2010.
	
	\bibitem{babuska:78a}
	I.~Babu\v{s}ka and W.~C. Rheinboldt.
	\newblock A posteriori error estimates for the finite element method.
	\newblock {\em Int. J. Num. Meth. Eng.}, 12:1597--1615, 1978.
	
	\bibitem{22}
	R.~Bank and B.~Welfert.
	\newblock A posteriori error estimates for the {S}tokes problem.
	\newblock {\em SIAM J. Numer. Anal.}, 28:591--623, 1991.
	
	\bibitem{23}
	G.~Beavers and D.~Joseph.
	\newblock Boundary conditions at a naturally permeable wall.
	\newblock {\em J. Fluid Mech.}, 30:197--207, 1967.
	
	\bibitem{BeckHipt00}
	R.~Beck, R.~Hiptmair, R.~Hoppe, and B.~Wohlmuth.
	\newblock Residual based a posteriori error estimators for eddy current
	computation.
	\newblock {\em Math. Model. Numer. Anal.}, 34:159--182, 2000.
	
	\bibitem{24}
	D.~Braess and R.~Verf\"{u}rth.
	\newblock A posteriori error estimators for the raviart-thomas element.
	\newblock {\em SIAM J. Numer. Anal.}, 33:2431--2444, 1996.
	
	\bibitem{BDM:1985}
	F.~Brezzi, J.~J. Douglas, and L.~D. Marini.
	\newblock Two families of mixed finite elements for second order elliptic
	problem.
	\newblock {\em Numer. Math.}, 47:217--235, 1985.
	
	\bibitem{Ca:97}
	C.~Carstensen.
	\newblock A posteriori error estimate for the mixed finite element method.
	\newblock {\em Math. of Computations}, 66:465--476, 1997.
	
	\bibitem{carstensenandall}
	C.~Carstensen and G.~Dolzmann.
	\newblock A posteriori error estimates for mixed {FEM} in elasticity.
	\newblock {\em Numer. Math.}, 81(2):187--209, 1998.
	
	\bibitem{25}
	C.~Carstensen, T.~Gudi, and M.~Jensen.
	\newblock A {U}nifying {T}heory of a {P}osteriori {C}ontrol for {D}iscontinuous
	{G}alerkin {FEM}.
	\newblock {\em Numer. Math.}, 112:363--379, 2009.
	
	\bibitem{16}
	W.~Chen, P.~Chen, M.~Gunzburger, and N.~Yan.
	\newblock {S}uperconvergence {A}nalysis of {FEMs} for the {S}tokes-{D}arcy
	{S}ystem.
	\newblock {\em Mathematical Methods in the Applied Sciences}, 33:13, 2010.
	
	\bibitem{43}
	W.~Chen and Y.~Wang.
	\newblock A posteriori error estimate for {H}(div) conforming mixed finite
	element for the coupled {D}arcy-{S}tokes system.
	\newblock {\em Journal of Computational and Applied Mathematics}, 255:502--516,
	2014.
	
	\bibitem{clement:75}
	P.~Cl\'ement.
	\newblock Approximation by finite element functions using local regularisation.
	\newblock {\em RAIRO Mod\'elisation Math\'ematique et Analyse Num\'erique},
	9:77--84, 1975.
	
	\bibitem{costabel:00}
	M.~Costabel and M.~Dauge.
	\newblock Singularities of electromagnetic fields in polyhedral domains.
	\newblock {\em Arch. Rational Mech. Anal.}, 151:221--276, 2000.
	
	\bibitem{costabel:99}
	M.~Costabel, M.~Dauge, and S.~Nicaise.
	\newblock Singularities of maxwell interface problems.
	\newblock {\em RAIRO Mod\`el. Math. Anal. Num\'er.}, 33:627--649, 1999.
	
	\bibitem{creuse:02}
	E.~Creus\'e, G.~Kunert, and S.~Nicaise.
	\newblock A posteriori error estimation for the {S}tokes problem: Anisotropic
	and isotropic discretizations.
	\newblock {\em Math. Models Methods Appl. Sci.}, 14:1297--1341, 2004.
	
	\bibitem{46}
	M.~Cui and N.~Yan.
	\newblock A posteriori error estimate for the {S}tokes-{D}arcy system.
	\newblock {\em Math. Meth. Appl. Sci.}, 34:1050--1064, 2011.
	
	\bibitem{26}
	E.~Dari, R.~Dur\'an, and C.~Padra.
	\newblock Error estimators for nonconforming finite element approximations of
	the {S}tokes problem.
	\newblock {\em Math. Comp.}, 64:1017--1033, 1995.
	
	\bibitem{dauge:88}
	M.~Dauge.
	\newblock {\em Elliptic boundary value problems on corner domains}, volume 1341
	of {\em Springer-Verlag, Berlin}.
	\newblock Lecture Notes in Mathematics, 1988.
	
	\bibitem{27}
	M.~Discacciati and A.~Quarteroni.
	\newblock Navier-{S}tokes/{D}arcy coupling: Modeling, analysis, and numerical
	approximation.
	\newblock {\em Rev. Math. Comput.}, 22:315--426, 2009.
	
	\bibitem{28}
	W.~Doerfler and M.~Ainsworth.
	\newblock Reliable a posteriori error control for nonconforming finite element
	approximation of {S}tokes flow.
	\newblock {\em Math. Comp.}, 74:1599--1619, 2005.
	
	\bibitem{45}
	J.~Galvis and M.~Sarkis.
	\newblock Nonconforming mortar discretization analysis for the coupling
	{S}tokes-{D}arcy equations.
	\newblock {\em Electronic. Trans. Numer. Anal.}, 26:350--384, 2007.
	
	\bibitem{15}
	G.~Gatica.
	\newblock A note on the efficiency of residual-based a-posteriori error
	estimators for some mixed finite element methods.
	\newblock {\em Electron. Trans. Numer. Anal.}, 17:218--233, 2004.
	
	\bibitem{47}
	G.~Gatica, R.~Oyarz\`ua, and F.-J. Sayas.
	\newblock A residual-based a posteriori error estimator for a fully-mixed
	formulation of the {S}tokes-{D}arcy coupled problem.
	\newblock {\em Comput. Methods Appl. Mech. Engry.}, 200:1877--1891, 2011.
	
	\bibitem{29}
	G.~N. Gatica, S.~Meddahi, and R.~Oyarz\`ua.
	\newblock A conforming mixed finite element method for the coupling of fluid
	flow with porous media flow.
	\newblock {\em IMA J. Numer. Anal.}, 29:86--108, 2009.
	
	\bibitem{30}
	G.-N. Gatica, R.~Oyarz\`ua, and F.-J. Sayas.
	\newblock Convergence of a family of {G}alerkin discretizations for the
	{S}tokes-{D}arcy coupled proplem.
	\newblock {\em Numer. Meth. Part. Diff. Eq.}, 27:721--748, 2011.
	
	\bibitem{girault:86}
	V.~Girault and P.-A. Raviart.
	\newblock {\em Finite element methods for {N}avier-{S}tokes equations, {T}heory
		and algorithms.}, volume~5 of {\em Springer, Berlin}.
	\newblock In Computational Mathematics, 1986.
	
	\bibitem{grisvard:74}
	P.~Grisvard.
	\newblock Th\'eor\`emes de traces relatifs \`a un poly\`edre.
	\newblock {\em C. R. Acad. Sci. Paris S\'er.}, 278:1581--1583, 1974.
	
	\bibitem{grisvard:85a}
	P.~Grisvard.
	\newblock Elliptic {P}roblems in {N}onsmooth {D}omains.
	\newblock {\em Pitman, Boston--London--Melbourne}, 1985.
	
	\bibitem{31}
	A.~Hannukainen, R.~Stenberg, and M.~Vohralik.
	\newblock Unified framework for a posteriori error estimation for the {S}tokes
	problem.
	\newblock {\em Numer. Math.}, {S}ubmitted.
	
	\bibitem{HA:2016}
	K.~W. Hou\'edanou and B.~Ahounou.
	\newblock A posteriori error estimation for the {S}tokes-{D}arcy coupled
	problem on anisotropic discretization.
	\newblock {\em Math. Meth. Appl. Sci.}, 40(10):3741--3774 (2017), 2016.
	
	\bibitem{32}
	W.~J\"{a}ger and A.~Mikeli\'{c}.
	\newblock On the boundary conditions of the contact interface between a porous
	medium and a free fluid.
	\newblock {\em Ann. Scuola Norm. Sup. Oisa Cl. Sci.}, 23, 1996.
	
	\bibitem{17}
	W.~J\"{a}ger and A.~Mikeli\'{c}.
	\newblock On the interface boundary condition of beavers, joseph and saffman.
	\newblock {\em SIAM Journal on Applied Mathematics}, 60:1111--1127, 2000.
	
	\bibitem{48}
	W.~J\"{a}ger, A.~Mikeli\'{c}, and N.~Neuss.
	\newblock Asymptotic analysis of the laminar visous flow over a porous bed.
	\newblock {\em SIAM J. Sci. Comput.}, 22:2006--2028, 2001.
	
	\bibitem{33}
	G.~Kanschat and B.~Rivi\`ere.
	\newblock A strongly conservative finite element method for the coupling of the
	{S}tokes and {D}arcy flow.
	\newblock {\em J. Comput. Phys.}, 229:5933--5943, 2010.
	
	\bibitem{karaka:03}
	O.~Karakashian and F.~Pascal.
	\newblock A posteriori error estimates for a discontinuous {G}alerkin
	approximation of second-order problems.
	\newblock {\em {SIAM} J. Numer. Anal.}, 41:2374--2399, 2003.
	
	\bibitem{34}
	T.~Karpar, K.-A. Mardal, and R.~Winther.
	\newblock {U}nified {F}inite {E}lement {D}iscretizations of {C}oupled
	{D}arcy-{S}tokes {F}low.
	\newblock {\em Numer. Meth. Part. Diff. Eq.}, 25:311--326, 2008.
	
	\bibitem{35}
	C.~Lovadina and R.~Stenberg.
	\newblock Energy norm a posteriori error estimates for mixed finite element
	methods.
	\newblock {\em Math. Comp.}, 75:1659--1674, 2006.
	
	\bibitem{Adam:2003}
	A.~D. N., Brezzi, and F.~F. M.
	\newblock {\em 2nd ed., Pure Appl. Math. (Amst.)}, 140, 2003.
	\newblock Elsevier, Amsterdam.
	
	\bibitem{AHN:15}
	S.~Nicaise, B.~Ahounou, and W.~Hou\'edanou.
	\newblock A residual-based posteriori error estimates for a nonconforming
	finite element discretization of the {S}tokes-{D}arcy coupled problem:
	Isotropic discretization.
	\newblock {\em Afr. Mat., African Mathematical Union and Springer-Verlag Berlin
		Heidelberg: New York}, 27(3):701--729, 2016.
	
	\bibitem{creuse:03}
	S.~Nicaise and E.~Creus\'{e}.
	\newblock A posteriori error estimation for the heteregeneous {M}axwell
	equations on isotropic and anisotropic meshes.
	\newblock {\em Calcolo}, 40:249--271, 2003.
	
	\bibitem{36}
	F.~Nobel.
	\newblock A posteriori error estimates for the finite element approximation of
	the {S}tokes problem.
	\newblock {\em TICAM REPORT}, pages 03--13, 2003.
	
	\bibitem{37}
	L.~Payne and B.~Straughan.
	\newblock Analysis of the boundary condition at the interface between a viscous
	fluid and a porous medium and related modeling questions.
	\newblock {\em J. Math. Pures Appl.}, 77:317--354, 1998.
	
	\bibitem{38}
	B.~Rivi\`ere.
	\newblock Analysis of a discontinuous finite element method for the coupled
	{S}tokes and {D}arcy.
	\newblock {\em J. Sci. Comp.}, 23:479--500, 2005.
	
	\bibitem{39}
	B.~Rivi\`ere and I.~Yotov.
	\newblock Locally conservative coupling of {S}tokes and {D}arcy flows.
	\newblock {\em SIAM J. Numer. Anal.}, 42:1959--1977, 2005.
	
	\bibitem{7}
	H.~Rui and R.~Zhang.
	\newblock A unified stabilized mixed finite element method for coupling
	{S}tokes and {D}arcy flows.
	\newblock {\em Comput. Methods Appl. Mech. Engry.}, 198:2692--2699, 2009.
	
	\bibitem{44}
	P.~Saffman.
	\newblock On the boundary condition at the interface of a porous medium.
	\newblock {\em Stud. Appl. Math.}, 1:93--101, 1971.
	
	\bibitem{40}
	R.~Verf\"{u}rth.
	\newblock A posteriori error estimators for the {S}tokes equations.
	\newblock {\em Numer. Math.}, 3:309--325, 1989.
	
	\bibitem{verfurth:94a}
	R.~Verf\"urth.
	\newblock A posteriori error estimation and adaptive mesh-refinement
	techniques.
	\newblock {\em J. Comput. Appl. Math.}, 50:67--83, 1994.
	
	\bibitem{verfurth:96b}
	R.~Verf\"urth.
	\newblock A review of a posteriori error estimation and adaptive
	mesh-refinement techniques.
	\newblock {\em Wiley-Teubner, Chrichester, UK.}, 1996.
	
	\bibitem{41}
	J.~Wang, Y.~Wang, and X.~Ye.
	\newblock A posteriori error estimation for an interior penalty type method
	employing {H}(div) elements for the {S}tokes equations.
	\newblock {\em SIAM J. Sci, Comp.}, 33:131--152, 2011.
	
	\bibitem{42}
	J.~Wang, Y.~Wang, and X.~Ye.
	\newblock A posteriori error estimate for stabilized finite element methods for
	the {S}tokes equations.
	\newblock {\em Int. J. Numer. Anal. Model.}, 9:1--16, 2012.
	
	\bibitem{12}
	L.~J. William, S.~Friedhelm, and Y.~Ivan.
	\newblock Coupling fluid flow with porous media flow.
	\newblock {\em SIAM J. Numer. Anal.}, 40(6):2195--2218 (2003), 2002.
	
	\bibitem{JAFH:2018}
	J.~Yu, M.~A.~A. Mahbub, F.~Shi, and H.~Zheng.
	\newblock Stabilized finite element method for the stationary mixed
	{S}tokes-{D}arcy problem.
	\newblock {\em Advances in Diffference Equations}, https://
	doi.org/10.1186/s13662-018-1809-2346, 2018.
	
\end{thebibliography}
\end{document}